\documentclass[11pt]{article}
\usepackage{mathrsfs}
\usepackage{amssymb,latexsym,tikz,graphicx,eepic,caption}
\usepackage{amsthm}
\usepackage{amsmath,mathrsfs,amssymb}
\usepackage{amscd,mathtools}
\usepackage{rotating}
\usepackage{amsfonts}
\usepackage{graphicx}
\usepackage[all]{xy}
\usepackage[english] {babel}

\usepackage{mathtools}

\usetikzlibrary{arrows,shapes}
\usepackage[section]{placeins}
%\usepackage{tikz}
%\usetikzlibrary{matrix,arrows}
\usepackage[top=0.5in,bottom=1in,left=1.25in,right=1.25in]{geometry}
\usepackage{graphicx}
\usepackage{verbatim}

\usepackage{setspace}
%\onehalfspacing
\usepackage{titlesec}
\usepackage{stmaryrd}

\usepackage{float}
\usepackage{tabularx}

\newtheorem{theorem}{Theorem}[section]
\newtheorem{definition}[theorem]{Definition}
\newtheorem{remark}[theorem]{Remark}
\newtheorem{proposition}[theorem]{Proposition}
\newtheorem{corollary}[theorem]{Corollary}
\newtheorem{example}[theorem]{Example}
\newtheorem{lemma}[theorem]{Lemma}

\newtheorem{theoremintr}{Theorem}

\numberwithin{equation}{section}

\def\Z{\mathbb{Z}}

\begin{document}

	\title{On the multiplicity of the eigenvalues of discrete tori
		\footnote{This work is supported by National Natural Science Foundation of China (Grant 12071371).}
	}
	\author{Bing Xie\footnote{B. Xie,
			Email: {\tt xiebing@sdu.edu.cn}},
		Yigeng Zhao\footnote{Y. G. Zhao,
			Email: {\tt zhaoyigeng@westlake.edu.cn   }}
		and Yongqiang Zhao\footnote{Y. Q. Zhao,
			Email: {\tt yqzhao@wias.org.cn  }}.
	}
	\date{\today}

	\maketitle

	\begin{abstract}
		It is well known that the standard flat torus $\mathbb{T}^2=\mathbb{R}^2/\Z^2$ has arbitrarily large Laplacian-eigenvalue multiplicities. We prove, however, 
		that $24$ is  the optimal upper bound for  the multiplicities of the nonzero eigenvalues of a $2$-dimensional discrete torus. And for general higher dimension discrete tori, we characterize the eigenvalues with large multiplicities. As consequences, we get uniform boundedness results of the multiplicity for a long range and an optimal global bound for the multiplicity. Our main tool of proof is the theory of vanishing sums of roots of unity. 
		
		\vspace{4mm}
		%\noindent{\it 2000 MSC numbers}: Primary  05C25, 11D45
		%\vspace{04pt}	
		
		\noindent{\it Keywords}:  Multiplicity, Eigenvalue, Discrete torus, Cayley graph, Vanishing sum.
	\end{abstract}

	\section{Introduction}
	
	 It is well known that the standard flat torus $\mathbb{T}^2=\mathbb{R}^2/\Z^2$ has arbitrarily large Laplacian-eigenvalue multiplicities. More precisely, for the Laplacian eigenvalue equation on the torus given by
	\begin{equation}\label{Laplacian}
		-\Delta u=-\left(\frac{\partial^2}{\partial x^2}+\frac{\partial^2}{\partial y^2}\right)u=\lambda u,\;{\rm on}\;\mathbb{T}^2,
	\end{equation}
	its eigenvalues are $\lambda=4\pi^2(m^2+n^2)$ with corresponding  eigenfunctions $e^{2\pi i(mx+ny)}$,  $ \forall (m,n)\in\mathbb{Z}^2$. The multiplicity of the eigenvalue $4\pi^2 M$ can be explicitly given by 
	\begin{equation*}
		m(4\pi^2 M)=r_2(M) :=\sharp\{(m,n)\in\mathbb{Z}^2|\,M=m^2+n^2 \}.
	\end{equation*}
	The arithmetic function $r_2(M)$ has been well studied in number theory \cite[\S 16.9]{Hardy}.  If the integer $M$ decomposes as  
	$M=2^t\prod\limits_{p\equiv 1 \bmod 4}p^r\prod\limits_{q\equiv 3\bmod 4}q^s$,  then we have 
	
	\[m(4\pi^2 M)=4 \prod_{p \equiv 1\bmod 4} (r+1) \prod_{q\equiv 3\bmod 4} \frac{(-1)^s+1}{2}.\]
	Then it is clear that 
	$$
	\limsup_{M\rightarrow \infty}m(4\pi^2 M)=\infty.
	$$
	
	As discrete analogues and approximations of $\mathbb{T}^2$, it is natural to ask that whether the similar phenomenon occurs for the standard discrete torus $T^2_N=\Z/N\Z \times \Z/N\Z $ for any integer $N\geq 3$.  It is the Cayley graph associated to the group $\Z /N\Z\times \Z/N\Z $ and the generating set $S=\{(0, 1),\,(0, -1),\,(1,0),\,(-1, 0)\}.$
	
	For any $f\in L^2(T^2_N) $, the discrete Laplacian is given by 
	\[ \Delta f(v)=-\sum_{w\in S}(f(v+w)-f(v)).\]
	Recall the basic fact from graph theory that the Laplacian-eigenvalues of $T^2_N$ are 
	\[ \lambda(k_1, k_2)=4\sin^2 \frac{k_1 \pi}{N} + 4\sin^2 \frac{k_2 \pi}{N}, \quad \quad  0\leq k_1, k_2 <N.\]
	Since $T^2_N$ is regular, each Laplacian eigenvalue is one to one corresponding to an eigenvalue of its adjacency matrix, i.e.,  
	\[\mu(k_1, k_2)=2\cos \frac{2k_1 \pi}{N} + 2\cos \frac{2k_2 \pi}{N},  \quad \quad  0\leq k_1, k_2 <N,\]
	and they share the same multiplicity.  Therefore, we may only consider the eigenvalues of the adjacency matrix.

	The discrete torus $T^2_N$ is the product of two copies of the Cayley graph $C_N$ associated to the cyclic group $\Z/N\Z$ with the generating set $\{1,-1\}$.  Recall that, the spectrum of $C_N$ is 
	$$
	{ \rm{Spec}} (C_N)= \begin{bmatrix}
		2& 2\cos\frac{2}{N}\pi & 2\cos\frac{4}{N} \pi& \dots &  2\cos\frac{N-1}{N}\pi \\
		1& 2&2  & \dots &  2
	\end{bmatrix}
	$$ 
	if $N$ is odd, and
	$$
	{\rm{Spec}} (C_N)= \begin{bmatrix}
		2& 2\cos\frac{2}{N}\pi & 2\cos\frac{4}{N}\pi & \dots &  2\cos\frac{N-2}{N}\pi &-2 \\
		1& 2&2  & \dots &  2  &1
	\end{bmatrix}
	$$ 
	if $N$ is even, where the second lines in the above charts denote the multiplicity of the corresponding eigenvalues in the first lines.
	
	By symmetry, each eigenvalue $\mu(k_1,k_2)$, for $0< k_1\neq k_2 <\lfloor \frac{N}{2}\rfloor$, 
	has multiplicity at least $8$, and the multiplicity of  $2\cos\frac{2k\pi}{N}\pm 2$ is at least $4$. But certain eigenvalues may have large multiplicities. For example, in the case that $N=12$, we have the following identities
	\[ 2\cos\frac{8}{12}\pi+2\cos 0=1= 2\cos\frac{4}{12}\pi+2\cos \frac{6}{12}\pi .\]
	This implies the multiplicity of the eigenvalue $1$ is at least $12$.  Therefore, to determine the multiplicities, we have to study identities of cosine functions, i.e., the vanishing sums of cosines.

	For an even positive integer $N\geq 3$, it is easy  to show that  $0=2+(-2)$ is an eigenvalue of $T_N^2$ with multiplicity $2N-2$ (cf. Lemma \ref{mul-0-n=2}). One of  our main results is the following theorem:
	\begin{theoremintr}[Proposition \ref{rough_bound_n=2}, Uniform boundedness of the multiplicity for $d=2$]\label{bound24}
		For any integer $N\geq 3$,  the multiplicity of every non-zero eigenvalue of the adjacent matrix of $T^2 _N$ is uniformly bounded by $24$, and this bound is optimal.
	\end{theoremintr}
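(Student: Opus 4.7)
The plan is to reformulate the multiplicity combinatorially and then invoke the theory of vanishing sums of roots of unity.

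Set $\zeta=e^{2\pi i/N}$, so that $\mu(k_1,k_2) = (\zeta^{k_1}+\zeta^{-k_1}) + (\zeta^{k_2}+\zeta^{-k_2})$. I would organize solutions $(k_1,k_2)\in(\Z/N\Z)^2$ of $\mu(k_1,k_2)=\mu$ by their unordered \emph{cosine content} $\{2\cos(2k_1\pi/N),\,2\cos(2k_2\pi/N)\}$. The symmetry group $\{\pm 1\}^2 \rtimes S_2$, acting by independent sign changes and coordinate swap, preserves both $\mu$ and the cosine content, and partitions the solution set into orbits of size in $\{1,2,4,8\}$. A generic content (two distinct entries, both in $(-2,2)$) yields an orbit of size $8$; the size drops when the two entries coincide or when one of them lies in $\{\pm 2\}$.

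The key step is to bound the total multiplicity, which is the sum of orbit sizes over all cosine contents summing to $\mu$. If two distinct cosine contents both sum to $\mu$, say with $x_i=2\cos(2a_i\pi/N)$ and $y_i=2\cos(2b_i\pi/N)$ for $i=1,2$, equating the two sums produces a signed vanishing sum
\[
\zeta^{a_1}+\zeta^{-a_1}+\zeta^{b_1}+\zeta^{-b_1}-\zeta^{a_2}-\zeta^{-a_2}-\zeta^{b_2}-\zeta^{-b_2}=0
\]
of at most eight $\pm 1$-weighted roots of unity. Applying the Conway--Jones classification of short vanishing sums (together with a case analysis on the possible collapses among these eight terms) I would show that for $\mu\neq 0$ the total contribution of orbits cannot exceed $24$. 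The non-vanishing hypothesis is essential here: for $\mu=0$ with $N$ even, the trivial identities $(\zeta^k+\zeta^{-k})+(\zeta^{k+N/2}+\zeta^{-k-N/2})=0$ produce unboundedly many decompositions, recovering Lemma~\ref{mul-0-n=2}.

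For optimality, I would exhibit an explicit pair $(N,\mu)$ achieving $m(\mu)=24$, chosen so that three cosine contents each giving a full-size-$8$ orbit all sum to the same non-zero $\mu$. This asks for $N$ divisible by several small primes so that independent rational cosine identities of the flavor $2\cos(2\pi/5)+2\cos(4\pi/5)=-1=2\cos(2\pi/3)+0$ can be combined with a third such decomposition.

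The main technical obstacle is the Conway--Jones analysis of the eight-term signed vanishing sum: one has to handle all the ways in which positive and negative terms can partially cancel before the classification is invoked, and then rule out any ``mixed'' configuration of short and full orbits that could push the weighted total past $24$. A secondary challenge is the optimality construction, which requires locating an $(N,\mu\neq 0)$ whose arithmetic simultaneously produces three independent generic cosine decompositions.
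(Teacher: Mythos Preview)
Your plan is essentially the same as the paper's: reduce to counting unordered cosine contents (each generic content contributing an orbit of size $8$), then use the classification of short vanishing sums to show that at most three contents can share a given nonzero value $\mu$, giving $8+8+8=24$. The only cosmetic difference is that the paper works directly with Wlodarski's list of length-$4$ cosine relations rather than passing to the length-$8$ signed root-of-unity sum and Conway--Jones; since $-\cos\theta=\cos(\pi-\theta)$, the signs are absorbed and one lands in a \emph{four}-term cosine equation, which makes the case analysis shorter (no need to track collapses among eight terms). The paper's optimality witness is $N=60$, $\mu=2\cos\tfrac{4\pi}{5}+2\cos\tfrac{\pi}{3}$, with the three contents coming from Wlodarski's types (iii), (vi) and the base pair itself.
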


	To put Theorem \ref{bound24} into context, we recall the spectral zeta functions of $\mathbb{T}^2=\mathbb{R}^2/\Z^2 $ and $T^2_N $ are given 
	by \[\zeta_{\mathbb{T}^2} (s)= \sum^{\infty}_{i=1} \frac{1}{\lambda^s_i} \quad\quad  \text{and} \quad\quad \zeta_{T^2_N}(s)=\sum^{N^2 -1}_{j=1} \frac{1}{\lambda^s_j},\]
	respectively, where $\lambda_i$ runs through all nonzero Laplacian-eigenvalues of $\mathbb{T}^2$ and $\lambda_j$ through nonzero Laplacian-eigenvalues of
	$T^2_N$. 	A theorem of Chinta, Jorgenson and Karlsson shows that:
	
	\begin{theoremintr}{(\cite{CJK}, \cite[page 586 ]{FK})}\label{CJK}
		For $Re(s)>1$,  one has  $$\lim_{N\to \infty} N^{-2s}\zeta_{T^2_N}(s)= \zeta_{\mathbb{T}^2} (s).$$		
	\end{theoremintr}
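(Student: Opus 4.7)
The plan is to view $N^{-2s}\zeta_{T^2_N}(s)$ as a Riemann-sum-type approximation to $\zeta_{\mathbb{T}^2}(s)$ and pass to the limit by dominated convergence on the counting measure of $\Z^2\setminus\{(0,0)\}$. First, using the symmetry $\sin^2(k\pi/N)=\sin^2((N-k)\pi/N)$, I would reindex the nonzero Laplacian eigenvalues via the symmetric range $I_N:=\{k\in\Z : -N/2<k\leq N/2\}$, so that
\[
N^{-2s}\zeta_{T^2_N}(s)=\sum_{(k_1,k_2)\in I_N^2\setminus\{(0,0)\}}\frac{1}{\bigl(4N^2\sin^2(k_1\pi/N)+4N^2\sin^2(k_2\pi/N)\bigr)^s}.
\]
Extending each summand by $0$ outside $I_N^2$ then gives a sequence of functions $f_N$ on $\Z^2\setminus\{(0,0)\}$ whose sum equals the left-hand side.

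For the pointwise limit, fix $(k_1,k_2)\neq(0,0)$. Once $N$ is large enough that $(k_1,k_2)\in I_N^2$, the elementary fact $\lim_{N\to\infty}N\sin(k_i\pi/N)=k_i\pi$ yields
\[
f_N(k_1,k_2)\longrightarrow\frac{1}{\bigl(4\pi^2(k_1^2+k_2^2)\bigr)^s},
\]
which is exactly the $(k_1,k_2)$-summand of $\zeta_{\mathbb{T}^2}(s)$ in view of the description $\lambda_{m,n}=4\pi^2(m^2+n^2)$ recalled in the Introduction, with the appropriate multiplicity already accounted for by summing over $\Z^2\setminus\{(0,0)\}$.

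For the uniform dominating function I would invoke the Jordan-type inequality $\sin(\pi t)\geq 2t$ on $[0,1/2]$. Every $k\in I_N$ satisfies $|k|/N\leq 1/2$, so $N|\sin(k\pi/N)|\geq 2|k|$, hence
\[
4N^2\sin^2(k_1\pi/N)+4N^2\sin^2(k_2\pi/N)\geq 16(k_1^2+k_2^2).
\]
Consequently $|f_N(k_1,k_2)|\leq 16^{-\operatorname{Re}(s)}(k_1^2+k_2^2)^{-\operatorname{Re}(s)}$ for every $N$, and the right-hand side is summable over $\Z^2\setminus\{(0,0)\}$ precisely when $\operatorname{Re}(s)>1$, by the standard Epstein zeta comparison (e.g.\ dyadic annuli give the comparison with $\int_1^\infty r^{1-2\operatorname{Re}(s)}\,dr$). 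Tannery's theorem (dominated convergence for series) then lets us exchange $\lim_{N\to\infty}$ with the sum, producing $\zeta_{\mathbb{T}^2}(s)$.

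I expect the only real technical point to be the uniform comparison in step three: the Jordan bound must be applied in the slightly tuned form above so that the resulting majorant is summable for \emph{every} $s$ with $\operatorname{Re}(s)>1$, with constants independent of $N$. Everything else is routine bookkeeping—the zero-extension off $I_N^2$ and the pointwise limit $N\sin(k\pi/N)\to k\pi$—so the Chinta--Jorgenson--Karlsson identity reduces to a clean application of Tannery's theorem once the Jordan-type domination is in hand.
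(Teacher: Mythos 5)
Your argument is correct, but note that the paper does not prove this statement at all: Theorem~\ref{CJK} is quoted from Chinta--Jorgenson--Karlsson \cite{CJK} (see also \cite{FK}) and used only as context for Theorem~\ref{bound24}, so there is no in-paper proof to compare against. As a standalone proof your route is sound and essentially the most elementary one available: the reindexing over $I_N=\{-N/2<k\leq N/2\}$ is a genuine bijection with the spectrum since $\sin^2(k\pi/N)=\sin^2((N-k)\pi/N)$ and the eigenvalue vanishes only at $(0,0)$; the pointwise limit $N\sin(k\pi/N)\to k\pi$ identifies the limiting summand with the $(k_1,k_2)$-term of $\zeta_{\mathbb{T}^2}(s)$, with multiplicities handled automatically by summing over $\Z^2\setminus\{(0,0)\}$; and the Jordan bound $\sin(\pi t)\geq 2t$ on $[0,1/2]$ gives the $N$-independent majorant $\bigl(16(k_1^2+k_2^2)\bigr)^{-\mathrm{Re}(s)}$, summable exactly for $\mathrm{Re}(s)>1$, so Tannery's theorem applies. (One pedantic point worth stating explicitly: for positive real $z$ one has $|z^{-s}|=z^{-\mathrm{Re}(s)}$, which is what makes the majorant work for complex $s$.) The original proof of \cite{CJK} proceeds quite differently, via the heat kernel on $T^2_N$, Bessel/theta-function identities and Mellin transforms; that machinery buys much more (meromorphic continuation, the full asymptotic expansion of $\zeta_{T^2_N}(s)$ in $N$, and the connection to the Riemann zeta function exploited in \cite{FK}), whereas your dominated-convergence argument yields only the stated limit on $\mathrm{Re}(s)>1$ --- but that is all the theorem asserts.
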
	
	
	Theorem \ref{CJK} says that the spectral zeta function $\zeta_{\mathbb{T}^2} (s)$ is indeed the limit (up to the scaling factor)
	of the spectral graph zeta functions $\zeta_{T^2_N}(s)$ as $N \to \infty$. Comparing Theorem \ref{CJK} with Theorem \ref{bound24} makes the latter look even more interesting and, in some sense, surprising. 	
	
	Generally speaking,  we know very little about eigenvalue-multiplicity of the Laplacian operator either for Riemannian manifolds or for the discrete cases (graphs).  But it is a natural and basic problem with many important applications. For example, the recent paper of Jiang-Tidor-Yao-Zhang-Zhao \cite{JTYZZ} solves a longstanding problem on equiangular lines in Euclidean spaces by showing an upper bound of the multiplicity of the second eigenvalue of a connected graph. In this paper, we aim to investigate the multiplicity problem for a basic class of examples i.e. the discrete tori, systematically.

	For a general discrete torus $T_N^d$ with $d \in \mathbb{N}$, the Cayley graph associated to the group $(\Z/N\Z)^d$ and the generating set $ \{\pm e_1, \dots, \pm e_d\}$, with $e_i$ the canonical basis with $1$ at the $i$-th coordinate and $0$ at all the other coordinates,  
eigenvalues of its adjacency matrix are 
	\[\mu(k_1, k_2, \dots, k_d)=2\cos \frac{2k_1 \pi}{N} + 2\cos \frac{2k_2 \pi}{N}+ \cdots + 2\cos \frac{2k_d \pi}{N},  \quad  0\leq k_1, k_2, \dots, k_d <N.\]	
	
	The multiplicities of eigenvalues problems are much more involved and complicated. For example, if $2|N$ then $0$ is an eigenvalue of $T^2_N$. Therefore, $2\cos\frac{2k\pi}{N}=2\cos\frac{2k\pi}{N}+0$ is an eigenvalue of $T^3_N$ with multiplicity at least $2N-2$, for any $0\leq k \leq N$. This implies that we have many eigenvalues with large multiplicities if $d\geq 3$. Given this phenomenon, however, we still have the following uniformity result.
	
	\begin{theoremintr}[Corollary \ref{rang_bound}, Uniform boundedness of the multiplicity for higher dimensional tori]\label{bound-d} For $N\geq 3$, we list all eigenvalues of $T^d_N$ as $\mu_1\geq \mu_2 \geq \cdots \geq \mu_{N^d}$. There exists a constant $c=c_d$, $0< c \leq \frac{1}{2}$, such that if $k\in [1, cN^d) \cup ((1-c)N^d, N^d] $,  then the multiplicity $m_{T^d_N}(\mu_k)$ is uniformly bounded by a constant $C_d$ which only depends on $d$. 	
	\end{theoremintr}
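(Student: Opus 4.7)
The plan is to derive this corollary from the main characterization theorem of the paper, which should bound $m_{T^d_N}(\mu)$ uniformly by some $C_d$ once $\mu$ is bounded away from a central ``middle'' region where high-multiplicity decompositions can arise. More precisely, one expects constants $C_d$ and $\epsilon_d > 0$, depending only on $d$, such that every eigenvalue $\mu$ with $|\mu| > 2d - \epsilon_d$ satisfies $m_{T^d_N}(\mu) \leq C_d$. Such a gap should exist because the large multiplicities come from decompositions of the form $\mu = \mu' + \mu''$ with $\mu''$ a middle eigenvalue of a lower-dimensional subtorus that already carries large multiplicity (for instance $\mu'' = 0$ in $T^2_N$ when $N$ is even has multiplicity $\Theta(N)$), and such decompositions confine the high-multiplicity $\mu$ to a bounded sub-interval of $(-2d, 2d)$.

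Granted this, the remaining task is to show that the top $c_d N^d$ and bottom $c_d N^d$ eigenvalues all lie in the safe region $|\mu| > 2d - \epsilon_d$. Writing $\mu(k_1, \ldots, k_d) = 2d - 4\sum_{i=1}^d \sin^2(k_i\pi/N)$, the inequality $\mu \geq 2d - \epsilon_d$ is guaranteed by the uniform per-coordinate bound $\sin^2(k_i\pi/N) \leq \epsilon_d/(4d)$, i.e.\ $|k_i| \leq K_N := \lfloor (N/\pi)\arcsin(\tfrac12 \sqrt{\epsilon_d/d}) \rfloor$ when we take representatives in $(-N/2, N/2]$. For $N$ larger than some threshold $N_0(d)$ one has $K_N \sim N\sqrt{\epsilon_d/d}/(2\pi)$, so the box $\{-K_N, \ldots, K_N\}^d$ contains at least $c_d N^d$ tuples for some $c_d = c_d(\epsilon_d, d) > 0$; hence $\mu_k \geq 2d - \epsilon_d$ for every $k \leq c_d N^d$, and the characterization yields $m_{T^d_N}(\mu_k) \leq C_d$. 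For $N \leq N_0(d)$ the whole statement is trivial, since the spectrum has at most $N_0(d)^d$ elements, which can be absorbed into $C_d$.

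The symmetric bound on the bottom $c_d N^d$ eigenvalues is handled by the shift $k_i \mapsto k_i + N/2$ when $N$ is even, which is a bijection on tuples that sends $\mu \mapsto -\mu$; when $N$ is odd the minimum eigenvalue is $-2d\cos(\pi/N)$, close to $-2d$ for large $N$, and the analogous Taylor expansion and counting argument around $k_i = (N \pm 1)/2$ goes through verbatim. The genuine obstacle is the characterization theorem invoked in the first step; its proof rests on the theory of vanishing sums of roots of unity, extending the approach used to prove Theorem~\ref{bound24} (Proposition~\ref{rough_bound_n=2}) in the $d = 2$ case, and the delicate point is to control how those vanishing sums can combine to produce large fibers over a given $\mu$. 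The counting step we carry out above is then a routine consequence, and the symmetry argument for the bottom eigenvalues is essentially formal.
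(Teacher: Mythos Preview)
Your approach matches the paper's: both invoke Theorem~\ref{all_infinity_iff} to show that eigenvalues outside a central interval have uniformly bounded multiplicity, and then count via a box argument how many eigenvalues lie in the safe tails. The paper makes your $\epsilon_d$ explicit as $\epsilon_d = 4$: by Theorem~\ref{all_infinity_iff}, large multiplicity forces $\mu \in \sigma(T^{d-r}_N)$ for some $r \in I(0;N)$, and any such $r$ satisfies $r \geq 2$ (since $2r = \sum_l b_l p_l$ with some $b_l \geq 2$ gives $2r \geq 2p_1 \geq 4$), whence $|\mu| \leq 2(d-r) \leq 2(d-2)$; the box count with $\alpha = \cos^{-1}(1-\tfrac{2}{d})/(2\pi)$ and $c_d = \tfrac{1}{2}\alpha^d$ then proceeds exactly as you outline.
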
	
	
	Theorem \ref{bound-d}  implies that the multiplicity of eigenvalues of $ T^d_N$ is uniformly bounded for a long range. Comparing Theorem \ref{bound24} with Theorem \ref{bound-d}, it is clear that by taking the constant  $c=\frac{1}{2}$ in the latter one gets Theorem \ref{bound24} back (without giving the optimal bound $24$). We are, however, unable to give sharp constants $C_d$ in Theorem \ref{bound-d} as the one in Theorem \ref{bound24}. Moreover, the constant $c_d$ is much weaker for general $d\geq 3$ and we do not seek to optimize this constant. 
	
	For general $T^d_N$, we also get global bounds for the multiplicities with the optimal exponent $\frac{d}{2}$ (see Remark \ref{optimal_bound_all} for an example with the optimal exponent).

	\begin{theoremintr}[Corollary \ref{general_bound}]\label{total bound}
		Let $\mu$ be an eigenvalue of $T^d_N$ and $p_1$ the minimal prime factor of $N$.  Then $m_{T^d_N}(\mu)\leq C_dN^{\frac{d}{p_1}} $, where $C_d$ is a constant, which depends only on $d$. In particular, $m_{T^d_N}(\mu)\leq C_dN^{\frac{d}{2}}$.
	\end{theoremintr}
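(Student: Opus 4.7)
The plan is to recast the multiplicity as a counting problem for vanishing sums of $N$-th roots of unity, and then invoke the Lam--Leung structure theorem, which is the main combinatorial input used throughout this paper. Concretely, $m_{T^d_N}(\mu)$ equals the number of tuples $(k_1,\ldots,k_d)\in(\Z/N\Z)^d$ satisfying $\sum_{i=1}^d(\zeta_N^{k_i}+\zeta_N^{-k_i})=\mu$ with $\zeta_N=e^{2\pi i/N}$. Quotienting by the natural action of $(\Z/2\Z)^d\rtimes S_d$ (sign flips in each coordinate together with permutations) costs only a factor of at most $2^d d!$, which depends only on $d$, so it suffices to bound the number of symmetric multisets $M=\{\pm k_1,\ldots,\pm k_d\}\subset\Z/N\Z$ of size $2d$ satisfying $\sum_{a\in M}\zeta_N^a=\mu$.

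Fix any such base multiset $M^*$. For every other admissible $M$, the formal difference $M-M^*$ is a $\pm$-symmetric integer vector $(n_a)_{a\in\Z/N\Z}$ with $\sum_a n_a=0$, with total signed length $\sum_a|n_a|\le 4d$, and such that $\sum_a n_a\zeta_N^a=0$. By the Lam--Leung theorem, any such vanishing sum decomposes as a signed $\Z$-linear combination of minimal vanishing sums; each minimal vanishing sum is supported on a coset of $p$-th roots of unity in $\Z/N\Z$ for some prime $p\mid N$, and hence has length at least $p_1$. In particular, the number of minimal building blocks in the decomposition of $M-M^*$ is bounded in terms of $d$ alone, and each block is parameterized by a coset translate, of which there are at most $N/p_1$.

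Counting admissible perturbations, combining the $\pm$-symmetry (which pairs blocks with their negations), the balance condition $\sum_a n_a=0$ (which further constrains the signs and prime types of the blocks), and the non-negativity of the multiplicities of $M$ and $M^*$, yields at most $C'_d N^{d/p_1}$ possibilities for $M$, where $C'_d$ depends only on $d$. Multiplying by the symmetry factor $2^d d!$ gives the desired bound $m_{T^d_N}(\mu)\le C_dN^{d/p_1}$, and the ``in particular'' statement is immediate since $p_1\ge 2$ for every $N\ge 2$.

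The delicate step is achieving the exponent $d/p_1$ rather than the weaker exponent that a naive application of Lam--Leung would yield. The improvement exploits the combined effect of the $\pm$-symmetry of $M$, the balance condition on the vanishing difference $M-M^*$, and the non-negativity of the multiset multiplicities, all of which reduce the number of independent degrees of freedom in choosing a perturbation. Working out this counting carefully---and in particular verifying that the constant $C_d$ really depends only on $d$ and not on $N$---constitutes the main technical work of the proof.
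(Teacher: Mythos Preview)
Your approach---fixing a base representation $M^*$ and counting vanishing \emph{differences} $M-M^*$---is genuinely different from the paper's, and as written it leaves the decisive step undone. First, a factual slip: Lam--Leung (Theorem~\ref{Lam-Leungthm}(i)) does \emph{not} assert that every minimal vanishing sum is supported on a single coset of $p$-th roots; that describes only the symmetric ones $\zeta R_p$, and asymmetric minimal sums exist once $N$ has at least three prime factors. More importantly, your difference $M-M^*$ has total length up to $4d$ in roots of unity, so a naive block count gives up to $\lfloor 4d/p_1\rfloor$ minimal pieces and hence exponent $4d/p_1$. You assert that $\pm$-symmetry, the balance condition $\sum_a n_a=0$, and non-negativity together cut this to $d/p_1$, but you then explicitly label this reduction ``the main technical work of the proof'' and stop. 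I do not see how those constraints recover the missing factor of $4$: the $\pm$-symmetry at best pairs blocks with their conjugates (one factor of~$2$), while the balance condition is a single scalar relation among the block signs, not nearly enough to halve the number of free rotational parameters again. So what you have is a plan with the hard part missing, not a proof.

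The paper avoids this loss by never forming a difference. It works inside a \emph{single} representation $\mu=\sum_{j=1}^{d}2\cos\alpha_j$ and invokes the decomposition already built in the proof of Theorem~\ref{all_infinity_iff}: the non-degenerate piece $M_0$ and every minimal piece ``anchored'' by an unpaired root of an earlier piece have only $O_d(1)$ choices (via Evertse's Theorem~\ref{Evertsethm} and Lemma~\ref{FMVS_fix_one}), while each remaining \emph{free} piece is itself an admissible vanishing sub-sum of cosines and therefore occupies at least $p_1$ of the $d$ available cosine slots. Hence there are at most $\lfloor d/p_1\rfloor$ free pieces, each carrying one rotational parameter with $O(N)$ values, and $m_{T^d_N}(\mu)\le C_dN^{d/p_1}$ drops out immediately. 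Counting inside one representation of length $d$ (in cosines), rather than inside a difference of length $4d$ (in roots of unity), is precisely what delivers the sharp exponent; if you want to rescue your route, you would need an argument of comparable strength showing that the blocks of $M-M^*$ are not independent, and that argument is not present.
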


	The main ingredient of the proofs of the above Theorems is the theory of vanishing sums of  cosines or roots of unity with a given length. For Theorem \ref{bound24}, the classification of vanishing sums of cosines of length $4$ due to Wlodarski \cite{Wlodarski} or those of roots of unity of length $8$ due to Conway-Jones \cite{Conway} is sufficient. For the classification of vanishing sums of roots of unity, the longer length the sum is, the more complicated and more difficult the classification is. Difficulties already arise when we classify of vanishing sums of length, say, $16$. See \cite{Poonen, Christie}.  A classification of vanishing sum of roots of unity of arbitrary length seems, at least to the authors, not practical, if not at all impossible.  Hence for higher dimensional torus, we seek to characterize those eigenvalues with large multiplicities instead.  
	
	For $T^2_N$, we see the exceptional large multiplicity of the zero eigenvalue. For the higher dimensional case, we first consider the multiplicity of the zero eigenvalue and give explicit criteria for zero being an eigenvalue and having large multiplicity in Proposition \ref{zero_eigenvalue_critierion} and  Proposition \ref{zero_infinity_iff}, respectively. Then we obtain a criterion of large multiplicity for a general eigenvalue $\mu \in \sigma(T^d_N)$. Roughly speaking, the large multiplicity of $\mu$ is due to the facts that $\mu\in \sigma(T^{d-r}_N)$ and $0\in \sigma(T^r_N)$ with a large multiplicity, for some $1 \leq r < d$.  The results are summarized as follows: 
	
\begin{theoremintr}[Theorem \ref{all_infinity_iff}]\label{mainprop}
  Let  $N=p_1^{a_1}p_2^{a_2}\cdots p^{a_s}_s$ with $p_1< p_2<\dots<p_s$ are distinct primes. Let $d\geq 2$ and $\mu$ be an eigenvalue of $T^d_N$. Then the following are equivalent:
  \begin{itemize}
   \item[(i)] there exists $r\in \mathbb{Z}_{\geq 1}, b_1,\cdots, b_s \in \Z_{\geq 0}$  such that $\mu$ is an eigenvalue of $T^{d-r}_N$ and $2r=\sum\limits_{l=1}^{s}b_lp_l $ with at least one of $b_l\geq 2$;
   \item[(ii)]  the multiplicity $m_{T^d_N}(\mu)$ of $\mu$ satisfies $m_{T^d_N}(\mu)\geq c_dN$ for some explicit constant $c_d>0$ which depends only on $d$. 
  \end{itemize}
 \end{theoremintr}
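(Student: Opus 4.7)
My overall plan is to split the biconditional into the two natural implications: the easy direction (i)~$\Rightarrow$~(ii) is a direct construction, while the nontrivial direction (ii)~$\Rightarrow$~(i) extracts structure from many eigenvector collisions via the classification of vanishing sums of roots of unity. The engine in both directions is Proposition~\ref{zero_infinity_iff}, which I shall treat as a black box characterizing exactly when $0\in\sigma(T^r_N)$ has multiplicity $\gg N$.

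For (i)~$\Rightarrow$~(ii), I fix $r\geq 1$ satisfying the arithmetic condition $2r=\sum_l b_l p_l$ with some $b_l\geq 2$. Proposition~\ref{zero_infinity_iff} produces at least $c'_r N$ tuples $(j_1,\ldots,j_r)\in(\Z/N\Z)^r$ with $\sum_{i=1}^{r}2\cos(2\pi j_i/N)=0$. Given any eigenvector $(k_1,\ldots,k_{d-r})$ of $T^{d-r}_N$ realizing $\mu$, the concatenation $(k_1,\ldots,k_{d-r},j_1,\ldots,j_r)$ is an eigenvector of $T^d_N$ for the eigenvalue $\mu+0=\mu$. Permuting which $r$ of the $d$ coordinate slots carry the ``zero block'' produces a further factor of $\binom{d}{r}$ distinct eigenvectors (with a harmless adjustment if the two blocks happen to overlap), so $m_{T^d_N}(\mu)\geq\binom{d}{r}c'_r N$, and $c_d:=\min_{r}\binom{d}{r}c'_r$ over the finitely many admissible $r\leq d$ does the job.

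For (ii)~$\Rightarrow$~(i), I pass to the exponential picture with $\zeta=e^{2\pi i/N}$, using $2\cos(2\pi k/N)=\zeta^k+\zeta^{-k}$. Fix one tuple $K_0=(k_1^0,\ldots,k_d^0)$ realizing $\mu$. For any other realization $K=(k_1,\ldots,k_d)$, the difference yields a vanishing sum
\begin{equation*}
 \sum_{i=1}^{d}\bigl(\zeta^{k_i}+\zeta^{-k_i}\bigr)-\sum_{i=1}^{d}\bigl(\zeta^{k_i^0}+\zeta^{-k_i^0}\bigr)=0
\end{equation*}
of at most $4d$ $N$-th roots of unity. By the Lam--Leung structure theorem (refining Conway--Jones), such a vanishing sum is an integer combination of minimal ones, each supported on a full set of $p$-th roots of unity for primes $p\mid N$; the total length equals $\sum_l b_l p_l$ with $b_l\in\Z_{\geq 0}$, and the number of combinatorial \emph{types} (specifying which coordinates of $K$ cancel against which coordinates of $K_0$ and into which minimal blocks) is bounded by a function of $d$ alone. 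Grouping the $\geq c_dN$ realizations by type and applying pigeonhole isolates a type responsible for $\gg N$ tuples. Within this type, $d-r$ coordinates remain rigid (exhibiting $\mu\in\sigma(T^{d-r}_N)$), while the remaining $r$ coordinates vary over $\gg N$ solutions of a fixed vanishing-sum equation summing to $0$ in $T^r_N$. Applying Proposition~\ref{zero_infinity_iff} in reverse to this $T^r_N$-component delivers $2r=\sum_l b_l p_l$ with some $b_l\geq 2$.

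The principal obstacle is the pigeonhole step: defining ``type'' robustly so that (a) the number of types is bounded by a constant depending only on $d$, independent of $N$, and (b) the free coordinates within a fixed type genuinely parametrize a $T^r_N$-level family of vanishing sums, decoupled from the rigid $(d-r)$ coordinates. The subtle sub-point is ruling out types in which a single minimal $p$-block straddles the rigid/free partition; this is where the Lam--Leung classification is invoked in its indecomposable form, and where the condition ``at least one $b_l\geq 2$'' ultimately enters, since a decomposition using each prime at most once can produce only boundedly many realizations as $N$ varies, contradicting the hypothesis $m_{T^d_N}(\mu)\geq c_d N$.
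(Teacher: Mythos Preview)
Your (i)$\Rightarrow$(ii) is fine and matches the paper (Lemma~\ref{multi_inequality} plus Proposition~\ref{zero_infinity_iff}); the extra binomial factor is unnecessary but harmless.

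For (ii)$\Rightarrow$(i) your route is genuinely different from the paper's, and as written it has a real gap. The paper never fixes a reference tuple $K_0$ and never forms the signed difference you write down; instead it takes a single realization $\mu=\sum_{j=1}^{2d}\zeta_j$, extracts a non-degenerate subsum $M_0$ with value $\mu$ (the number of possibilities for $M_0$ is bounded by Evertse's theorem, Theorem~\ref{Evertsethm}), and then processes the leftover minimal vanishing blocks $M_1,\dots,M_\ell$ by an \emph{admissibility} induction: at stage $i$ it checks whether $M_0+\cdots+M_i$ is conjugate-paired (``admissible''); if yes, the tail is already a zero of some $T^r_N$ and Proposition~\ref{zero_infinity_iff} finishes; if not, an unpaired root forces its conjugate into $M_{i+1}$, which by Lemma~\ref{FMVS_fix_one} pins $M_{i+1}$ to boundedly many choices.

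Your pigeonhole-by-type argument stalls exactly where this admissibility machinery does the work. First, your difference is a \emph{signed} vanishing sum, so Lam--Leung does not apply as stated; you would have to pass to $U_{2N}$, introducing the prime $2$ even for odd $N$. Second, your claim that every minimal vanishing sum is ``supported on a full set of $p$-th roots of unity'' is false: asymmetric minimal sums exist (Example~\ref{ex_symmetric}), and Theorem~\ref{Lam-Leungthm}(i) only lower-bounds their length. Third, and most seriously, your ``rigid/free'' dichotomy is not well-defined: a single coordinate $k_i$ contributes two roots $\zeta^{\pm k_i}$ that can land in different minimal blocks, one containing $K_0$-terms and one not, so $k_i$ is neither rigid nor free. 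You flag this as ``the subtle sub-point'' but do not resolve it; without a mechanism that propagates the conjugate-pairing constraint across blocks (which is precisely what the paper's admissibility induction supplies), the pigeonhole step does not deliver a clean $T^{d-r}_N\times T^r_N$ splitting, and the appeal to Proposition~\ref{zero_infinity_iff} ``in reverse'' is not yet justified.
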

	
	The above  theorem shows a special gap phenomenon that the multiplicity is either uniformly bounded or of the order at least $cN$,  for some constant $c$ independent of $N$. Both Theorem \ref{bound-d} and Theorem \ref{total bound} are consequences of Theorem \ref{mainprop}.

	The paper is organized as follows. In Section 2 we prove the uniform bound for $T^2_N$, then we recall some standard results on vanishing  sums of roots of unity in Section 3. The criterion of zero being an eigenvalue with large multiplicity are studied in Section 4, and we prove the main result Theorem \ref{mainprop} on the multiplicities of any eigenvalues in Section 5. In the last section,  we briefly discuss the eigenvalue multiplicity problem for general abelian Cayley graphs.   
	
	\section{Multiplicities of eigenvalues for $T^2_N$}
	
	In this section, we present a uniform bound for the multiplicities of  non-zero eigenvalues  for the Cayley graph $T^2_N=\Z /N\Z\times \Z/N\Z $ with  generating set  $\{(0, 1),\,(0, -1),\,(1,0),\,(-1, 0)\}$.
	
	For an even integer $N\geq 3$, both $2$ and $-2$ are eigenvalues of $C_N$, hence $0=2+(-2)$ is an eigenvalue of $T^2_N$.  Its multiplicity is explicitly given in the following lemma:
	\begin{lemma}\label{mul-0-n=2}
		For an even integer $N$, the multiplicity of $0$ in $T^2_N$ is $2N-2$.
	\end{lemma}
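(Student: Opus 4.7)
The plan is to directly count the pairs $(k_1,k_2)$ with $0\le k_1,k_2<N$ that produce the eigenvalue $0$. By the spectral formula recalled in the introduction, $\mu(k_1,k_2)=0$ amounts to
\[
\cos\tfrac{2k_1\pi}{N}+\cos\tfrac{2k_2\pi}{N}=0.
\]
Using $-\cos\theta=\cos(\pi-\theta)$ together with the fact that $\cos\alpha=\cos\beta$ iff $\alpha\equiv\pm\beta\pmod{2\pi}$, this cosine identity translates, after clearing denominators, into the disjunction of two modular conditions
\[
k_1+k_2\equiv \tfrac{N}{2}\pmod{N}\qquad\text{or}\qquad k_1-k_2\equiv \tfrac{N}{2}\pmod{N},
\]
at least one of which must hold; note that $N/2\in\Z$ because $N$ is assumed even.

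Next I would fix $k_1\in\{0,1,\dots,N-1\}$ and count the admissible $k_2$. The first congruence pins down the unique value $k_2\equiv N/2-k_1\pmod{N}$, and the second the unique value $k_2\equiv k_1-N/2\equiv k_1+N/2\pmod{N}$. These two candidate values coincide precisely when $N/2-k_1\equiv N/2+k_1\pmod{N}$, i.e.\ when $2k_1\equiv 0\pmod{N}$, which in the range $0\le k_1<N$ forces $k_1\in\{0,N/2\}$. So for each of the two exceptional indices $k_1\in\{0,N/2\}$ there is exactly one good $k_2$, while for each of the remaining $N-2$ indices there are exactly two good $k_2$'s. Summing,
\[
2(N-2)+2\cdot 1 = 2N-2,
\]
which is the claimed multiplicity.

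There is no serious obstacle here; the only point needing care is the coincidence analysis of the two candidate $k_2$'s, which uses crucially that $N$ is even so that $N/2$ lies in $\Z$. As a sanity check one can redo the count by inclusion--exclusion on the two congruence conditions: each individually cuts out exactly $N$ solution pairs in $\{0,\dots,N-1\}^2$, and their intersection consists precisely of the two pairs $(0,N/2)$ and $(N/2,0)$, yielding $N+N-2=2N-2$ again. A small numerical check at $N=4$ also returns $6=2N-2$.
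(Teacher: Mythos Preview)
Your proof is correct and follows essentially the same approach as the paper: both reduce the equation $\cos\frac{2k_1\pi}{N}+\cos\frac{2k_2\pi}{N}=0$ to the pair of congruences $k_1\pm k_2\equiv N/2\pmod N$ and then count solutions in $\{0,\dots,N-1\}^2$. The paper organizes the count by explicitly listing the four solution lines, whereas you fix $k_1$ and count admissible $k_2$ (with an inclusion--exclusion sanity check), but the underlying argument is identical.
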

	\begin{proof}
		It is enough to determine the solution of the following equation:
		\begin{equation*}
			2\cos\frac{2k_1}{N}\pi+ 2\cos\frac{2k_2}{N}\pi =0,
		\end{equation*}
		for $k_1,k_2\in \Z/N\Z$. Therefore, we have 
		\[ k_1\pm k_2 \equiv \frac{N}{2} \mod N.\]
		Choosing a representative class of $\Z/N\Z$, say $\{0,1,2,\cdots, N-1\}$, we may list the following four possibilities:
		\begin{itemize}
			\item[(i)] the solutions of $k_1+k_2 = \frac{N}{2} $: 
			$(k_1,k_2)=(i,\frac{N}{2}-i)$, for $0\leq i \leq \frac{N}{2}$;
			\item[(ii)] the solutions of $k_1+k_2 = \frac{3N}{2} $: 
			$(k_1,k_2)=(\frac{N}{2}+i,N-i)$, for $1\leq i \leq \frac{N}{2}-1$;
			\item[(iii)] the solutions of $k_1-k_2 = \frac{N}{2} $: 
			$(k_1,k_2)=(\frac{N}{2}+i,i)$, for $1 \leq i \leq \frac{N}{2}-1$;
			\item [(iv)] the solutions of $k_1-k_2 = -\frac{N}{2} $: 
			$(k_1,k_2)=(i,\frac{N}{2}+i)$, for $1 \leq i \leq \frac{N}{2}-1$.
		\end{itemize}
		There are $2N-2$ distinct solutions in total. Hence the assertion follows.
	\end{proof}
	
	\begin{definition}\label{Eigenvalue}
		For $k_1,\,k_2\in \Z/N\Z$, we denote the eigenvalue as
		$$
		\mu(k_1,k_2):=\mu_N(k_1,k_2):= 2\cos\frac{2k_1}{N}\pi+ 2\cos\frac{2k_2}{N}\pi,
		$$
		and its multiplicity as
		$$
		m(k_1,k_2)=	m_{T^2_N}(k_1,k_2):=\sharp \{\mu\in  \sigma({ T_N^2}) |\mu=\mu(k_1,k_2) \} .
		$$
	\end{definition}
	It is clear that $\mu_N(k_1,k_2)=\mu_N(k_2,k_1)$ and $\mu_N(k_1,k_2)=\mu_{dN}(dk_1,dk_2)$, for any $d\in \mathbb{N}$.  Thus we have $m_{dN}(dk_1,dk_2)\geq m_{N}(k_1,k_2)$ for any $d\in \mathbb{N}$.
	
	For an even positive  integer $N$, it is clear that
	$$
	\mu(N/2-k_1,N/2-k_2)=-\mu(k_1,k_2).
	$$

	As we have seen that the multiplicities closely relate to the vanishing sums of cosines.
	Thanks to the following result of Wlodarski, there are only finitely many types of vanishing sums of length $\leq 4$.
	\begin{theorem}{(\cite{Wlodarski})} \label{Woldarski}
		If $ \alpha_j$ commensurable with $\pi$ (i.e. $\alpha_j\in \mathbb{Q}\pi$) satisfy $0 \leq \alpha_j \leq \pi,\; j=1,2,3,4$. Then all the solutions $\{\alpha_1, \,\alpha_2,\,\alpha_3, \,\alpha_4\}$ of the equation
		\begin{equation}\label{vanishing_sum}
			\sum_{j=1}^4 \cos\alpha_j =0, 
		\end{equation}
		belong to the following quadruples
		\begin{flalign*}
			&(i) \;\{\alpha_1,\,\alpha_2,\,\pi-\alpha_1,\,\pi-\alpha_2\},\;0\leq\alpha_1\leq\alpha_2\leq \frac{\pi}{2};
			&	(ii)  \;\{\delta,\,\frac{2\pi}{3}-\delta,\,\frac{2\pi}{3}+\delta,\frac{\pi}{2}\},\;0\leq\delta\leq \frac{\pi}{3};\\
			&(iii)\;\{\frac{2\pi}{5},\,\frac{4\pi}{5},\,\frac{\pi}{2},\,\frac{\pi}{3}\},\; \{\frac{3\pi}{5},\,\frac{\pi}{5},\,\frac{\pi}{2},\,\frac{2\pi}{3}\};
			&	(iv)  \;\{\frac{\pi}{5},\,\frac{3}{5}\pi,\,\frac{\pi}{3},\,\pi\},\; \{\frac{4}{5}\pi,\,\frac{2}{5}\pi,\,\frac{2}{3}\pi,\, 0\}; \\
			&	(v) \;\{\frac{2\pi}{5},\,\frac{7\pi}{15},\,\frac{13\pi}{15},\,\frac{\pi}{3}\},\; \{\frac{3\pi}{5},\,\frac{8\pi}{15},\,\frac{2\pi}{15},\,\frac{2\pi}{3}\};
			&	(vi) \;\{\frac{\pi}{15},\,\frac{11\pi}{15},\,\frac{4\pi}{5},\,\frac{\pi}{3}\},\; \{\frac{14\pi}{15},\,\frac{4\pi}{15},\,\frac{\pi}{5},\,\frac{2\pi}{3}\};\\
			&	(vii) \;\{\frac{2\pi}{7},\,\frac{4\pi}{7},\,\frac{6\pi}{7},\,\frac{\pi}{3}\},\; \{\frac{5\pi}{7},\,\frac{3\pi}{7},\,\frac{\pi}{7},\,\frac{2\pi}{3}\}.
		\end{flalign*}
	\end{theorem}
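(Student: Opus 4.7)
The plan is to convert the cosine identity into a vanishing sum of roots of unity and then invoke a classification of short vanishing sums. Choosing a common denominator so that $\alpha_j=2\pi a_j/N$ for integers $a_j$, and setting $\zeta=e^{2\pi i/N}$, the identity $\sum_{j=1}^{4}\cos\alpha_j=0$ becomes
\[ \sum_{j=1}^{4}\bigl(\zeta^{a_j}+\zeta^{-a_j}\bigr)=0, \]
a real vanishing sum of at most $8$ roots of unity, where the coincidences $\zeta^{a_j}=\zeta^{-a_j}$ occur exactly when $\alpha_j\in\{0,\pi\}$.

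The next step is to apply the Conway--Jones classification of vanishing sums of roots of unity \cite{Conway}, which provides an explicit list of the \emph{minimal} vanishing sums (those with no proper vanishing subsum) of small length: the trivial binomial $1+(-1)=0$, the full $p$-term sum $1+\omega_p+\cdots+\omega_p^{p-1}=0$ for primes $p\in\{3,5,7\}$ (where $\omega_p=e^{2\pi i/p}$), and a short list of exceptional relations supported on $15$-th roots of unity. Every length-$\leq 8$ vanishing sum thus decomposes as a disjoint union of translates of such minimal blocks, and the invariance of our sum under complex conjugation forces the multiset of blocks to be stable under conjugation.

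We would then enumerate the admissible block partitions of a real $8$-term vanishing sum arising from four conjugate pairs $\{\zeta^{a_j},\zeta^{-a_j}\}$. The partition into four binomial blocks corresponds to pairings $\alpha_k=\pi-\alpha_j$ and yields family $(i)$. Partitions involving one or two length-$3$ blocks inject the angles $\pi/3$ and $2\pi/3$ into the identity and give families $(ii)$--$(iv)$. Partitions that use a length-$5$ or length-$7$ minimal block, possibly combined with a length-$3$ block or with one of the exceptional $15$-th root relations, account for the remaining families $(v)$--$(vii)$. In each case one translates the root-of-unity identity back to a cosine identity on $[0,\pi]$ to read off the explicit quadruples in the statement.

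The hardest step will be this last case analysis, particularly for blocks supported on $15$-th and $7$-th roots of unity: one must verify that the reality constraint (four conjugate pairs) together with $0\leq \alpha_j\leq\pi$ cuts the combinatorial possibilities down to exactly the seven families listed, and that seemingly different block decompositions do not yield genuinely new quadruples. The key rigidity making this enumeration terminate is that only primes $p\in\{2,3,5,7\}$ and a handful of length-$\leq 6$ relations in the $15$-th cyclotomic field can appear in a minimal vanishing sum of length $\leq 8$.
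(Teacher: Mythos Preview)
The paper does not prove this theorem; it is quoted as a result of Wlodarski \cite{Wlodarski}, and the only commentary is a one-line remark that Wlodarski's original argument proceeds by case-by-case analysis together with the Galois action of $(\mathbb{Z}/N\mathbb{Z})^*$ on $N$-th roots of unity (i.e., if $\sum_j\cos\alpha_j=0$ then $\sum_j\cos r\alpha_j=0$ for every $r$ coprime to $N$). So there is no in-paper proof to match against.

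Your proposal is nonetheless a sound alternative derivation, and in fact the paper explicitly notes in the introduction that the Conway--Jones classification of length-$8$ vanishing sums of roots of unity \cite{Conway} is sufficient for its purposes in place of Wlodarski's theorem. The difference in spirit is that Wlodarski works directly with cosines and the Galois symmetry to cut down the search, whereas you lift to the group ring $\mathbb{Z}[U_N]$, decompose into minimal vanishing blocks, and then push the reality (conjugate-pair) constraint through the block decomposition. Your route is arguably more structural and generalizes better to longer sums, at the cost of importing the Conway--Jones list (which itself requires a comparable amount of casework). One small point to tighten in your sketch: the sum always has exactly $8$ terms counted with multiplicity, and when some $\alpha_j\in\{0,\pi\}$ the corresponding pair collapses to a repeated root rather than reducing the length; you should track multiplicities carefully when matching against the Conway--Jones minimal relations, since several of the listed quadruples (e.g.\ family~(iv)) involve $\alpha_j=0$ or $\pi$.
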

	\begin{remark}
		The above theorem of Wlodarski is obtained via case-by-case analysis and a little bit of Galois theory of $(\Z/N\Z)^*$ acting on $N$-th roots of unity. More precisely, if
		$\sum\limits_{j=1}^4 \cos \alpha_j=0$, then, for any integer $r$ coprime to $N$, $\sum\limits_{j=1}^4 \cos r\alpha_j=0$.
	\end{remark}

	A direct consequence of Wlodarski's result is the following proposition:
	\begin{proposition}\label{general_multiplicities}
		\begin{itemize}
			\item[(i)] 	For an odd integer $N$, we have
			\begin{flalign*}
				& m(0,0)=1; \;m(0,k)=m(k,k)=4,\;{\rm for}\; k\in \Z/N\Z-\{0\};\\
				& m(k_1,k_2)=8,\;{\rm for}\; k_1\not= k_2\in \Z/N\Z-\{0\}.
			\end{flalign*}
			\item[(ii)] For an even integer $N$, with $12\nmid N$, $30\nmid N$ and $42\nmid N$, we have
			\begin{flalign*}
				& m(0,N/2)=2N-2; \;m(0,0)=1=m(N/2,N/2);\\
				& m(0,k)=m(N/2,k)=m(k,k)=4,\;{\rm for}\; k\in \Z/N\Z-\{0,\frac{N}{2}\};\\
				& m(k_1,k_2)=8,\;{\rm for}\;k_1\not= k_2\in \Z/N\Z-\{0,\frac{N}{2}\}.
			\end{flalign*}
		\end{itemize}
		
	\end{proposition}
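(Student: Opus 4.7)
The plan is to reduce the multiplicity computation to classifying all solutions $(k_1',k_2')\in(\Z/N\Z)^2$ of the cosine identity
\[ \cos\tfrac{2k_1\pi}{N}+\cos\tfrac{2k_2\pi}{N}=\cos\tfrac{2k_1'\pi}{N}+\cos\tfrac{2k_2'\pi}{N}, \]
and then to compare this set with the ``obvious'' orbit of $(k_1,k_2)$ under the symmetry group $G=\langle \text{swap},\ k_1\mapsto N-k_1,\ k_2\mapsto N-k_2\rangle$ of order $8$. After rewriting $-\cos\theta=\cos(\pi-\theta)$ and normalizing each $\tfrac{2k\pi}{N}$ to an angle $\beta\in[0,\pi]$ via the even $2\pi$-periodic symmetry of $\cos$, the identity becomes a vanishing sum of four cosines commensurable with $\pi$, to which Wlodarski's Theorem \ref{Woldarski} applies.

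The key step is to exclude Wlodarski's exceptional families (ii)--(vii) under our hypotheses on $N$. Each such family contains distinguished angles that are rational multiples $\tfrac{a\pi}{b}$ of $\pi$ with $b\in\{2,3,5,7,15\}$; for such an angle (in lowest terms) to equal $\tfrac{2k\pi}{N}$ for some integer $k$, one needs $b\mid N$ when $a$ is even and $2b\mid N$ when $a$ is odd. Enumerating: family (ii) requires $\tfrac{\pi}{2}$ and $\tfrac{\pi}{3}$, forcing $12\mid N$; families (iii)--(vi) require a fifth- or fifteenth-root-of-unity angle together with $\tfrac{\pi}{3}$, forcing $30\mid N$; family (vii) requires $\tfrac{\pi}{7}$ and $\tfrac{\pi}{3}$, forcing $42\mid N$. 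The assumptions $12\nmid N$, $30\nmid N$, $42\nmid N$ therefore leave only family (i); in part (i) the oddness of $N$ rules out all three divisibilities automatically.

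With only family (i) in play, the multiset $\{\beta_1,\beta_2,\pi-\beta_1',\pi-\beta_2'\}$ is closed under the involution $\theta\mapsto\pi-\theta$, producing three possible pairings: (a) $\beta_1\leftrightarrow\pi-\beta_1'$ and $\beta_2\leftrightarrow\pi-\beta_2'$, forcing $k_1'\equiv\pm k_1$ and $k_2'\equiv\pm k_2\pmod N$; (b) the swapped pairing, giving $k_1'\equiv\pm k_2$ and $k_2'\equiv\pm k_1\pmod N$; (c) $\beta_1\leftrightarrow\beta_2$ together with $\pi-\beta_1'\leftrightarrow\pi-\beta_2'$, which forces $\beta_1+\beta_2=\pi=\beta_1'+\beta_2'$, i.e.\ $\mu(k_1,k_2)=0$---possible only for even $N$ and already handled by Lemma \ref{mul-0-n=2}. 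Hence for every $\mu\neq 0$ the solution set coincides with the $G$-orbit of $(k_1,k_2)$, and an orbit-stabilizer count---noting that $k\mapsto N-k$ fixes precisely $\{0,N/2\}$ and that the swap becomes redundant exactly when $k_1\equiv\pm k_2\pmod N$---yields the orbit sizes $1$, $4$, and $8$ asserted in the proposition.

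The main obstacle is the case-by-case elimination of families (ii)--(vii): each contains several special angle values whose realizability as $\tfrac{2k\pi}{N}$ must be carefully checked to read off the precise divisibility condition on $N$. The subsequent orbit counting is routine, provided one is attentive to the collapse of the eight-element orbit to size $4$ (either $k_1$ or $k_2$ lies in $\{0,N/2\}$, or $k_1\equiv\pm k_2$) and to size $1$ (both coordinates lie in $\{0,N/2\}$ with $\mu\neq 0$).
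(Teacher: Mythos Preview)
Your approach is exactly the paper's: rewrite the multiplicity question as a length-$4$ vanishing cosine sum, invoke Wlodarski to rule out the exceptional quadruples under the stated divisibility hypotheses, and then read off the multiplicity as the size of the obvious dihedral $G$-orbit. Your write-up is considerably more explicit than the paper's one-line appeal to Theorem~\ref{Woldarski}, and the pairing analysis (a)/(b)/(c) together with the orbit-stabilizer count is correct.

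There is one genuine gap in your elimination of families (ii)--(vii), specifically for odd $N$. Your divisibility criterion ``$a\pi/b=\tfrac{2k\pi}{N}$ forces $b\mid N$ if $a$ even, $2b\mid N$ if $a$ odd'' checks realizability of an angle as one of the \emph{first} two entries $\beta_1,\beta_2$ of the vanishing quadruple. But the other two entries are $\pi-\beta_1',\pi-\beta_2'$, and for odd $N$ these are \emph{not} of the form $\tfrac{2k\pi}{N}$; they are odd multiples of $\pi/N$. Concretely, take $N=15$ and the second quadruple in family (iv), $\{4\pi/5,\,2\pi/5,\,2\pi/3,\,0\}$: all four numerators are even, so your criterion gives only $15\mid N$, which holds. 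Thus the sentence ``the oddness of $N$ rules out all three divisibilities automatically'' does not close the case.

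The fix is short: for odd $N$, writing each Wlodarski angle $a\pi/b$ (in lowest terms, $b\mid N$) as $(aN/b)\cdot\tfrac{\pi}{N}$, the parity of the numerator $aN/b$ equals the parity of $a$ (since $N/b$ is odd). A valid quadruple must contribute exactly two even and two odd numerators (two $\beta$'s and two $\gamma$'s). A quick inspection of the numerators in families (ii)--(vii) shows none of them splits $2$--$2$: families (ii) and (iii) contain $\pi/2$ (impossible for odd $N$), while each remaining listed quadruple has either three odd and one even numerator or the reverse. Only family (i), where $\alpha$ and $\pi-\alpha$ automatically have opposite numerator parity, survives. With this parity check added, your argument goes through.
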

	\begin{proof}
		In the case that $k_1k_2=0$ or $k_1=k_2$, the multiplicities could be determined easily.  For even $N\geq 4$, we have shown that $m(0,N/2)=2N-2$ in Lemma \ref{mul-0-n=2}. 
		By the symmetry, we have 
		$$m(k_1,k_2)\geq 8 \;\mathrm{if}\;\begin{cases}
			2\nmid N,\; k_1\neq k_2 \in \Z/N\Z\setminus\{0\};\\
			2\mid N,\; k_1\neq k_2 \in \Z/N\Z\setminus\{0,\frac{N}{2}\}.
		\end{cases}$$
		
		Note that, for odd $N$ or even $N$ with $12\nmid N, 30\nmid N$ and $42\nmid N$,  there are no nontrivial equations of the following form:
		$$
		2\cos\frac{2k_1\pi}{N}+ 2\cos\frac{2k_2\pi}{N}  =2\cos\frac{2k_3\pi}{N}+ 2\cos\frac{2k_4\pi}{N},
		$$
		or equivalently, no non-trivial solutions for
		$$
		\cos\frac{2k_1\pi}{N}+ \cos\frac{2k_2\pi}{N} +\cos (\pi-\frac{2k_3\pi}{N})+ \cos(\pi-\frac{2k_4\pi}{N})=0,
		$$
		by  Theorem  \ref{Woldarski}.	Therefore all the multiplicities are given by trivial relations and symmetry.
		
	\end{proof}
	
	To conclude our main Theorem \ref{bound24}, we treat the special cases that $12\mid N$, or $30\mid N$ or $42\mid N$ together instead of giving an explicit bound for each eigenvalue. 
	\begin{proposition}\label{rough_bound_n=2}
		For any integer $N\geq 3$,  we have an optimal bound $$m_{T^2_N}(k_1,k_2)\leq 24,$$ for any $k_1,k_2\in \Z/N\Z$ with $\mu_N(k_1,k_2)\neq 0$. 
	\end{proposition}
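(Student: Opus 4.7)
My plan is to translate the counting of pairs $(k_3, k_4) \in (\Z/N\Z)^2$ satisfying $\mu_N(k_3, k_4) = \mu_N(k_1, k_2)$ into a problem about $4$-term vanishing sums of cosines, and apply Theorem~\ref{Woldarski}. A pair $(k_3, k_4)$ contributes to the multiplicity precisely when $c_{k_1} + c_{k_2} - c_{k_3} - c_{k_4} = 0$, where $c_k := \cos(2k\pi/N)$. Using $-\cos\theta = \cos(\pi - \theta)$ together with the symmetry $c_k = c_{N-k}$ to place representatives in $[0, N/2]$, this rewrites as $\cos\alpha_1 + \cos\alpha_2 + \cos\alpha_3 + \cos\alpha_4 = 0$ with $\alpha_1, \alpha_2 = 2k_1\pi/N, 2k_2\pi/N$ and $\alpha_3, \alpha_4 = \pi - 2k_3\pi/N, \pi - 2k_4\pi/N$, each in $[0, \pi]$. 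Theorem~\ref{Woldarski} then classifies such quadruples into seven cases. When $N$ is odd, or when none of $12, 30, 42$ divides $N$, Proposition~\ref{general_multiplicities} already gives the stronger bound $8$, so I may henceforth assume one of these divisibilities holds.

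The argument then proceeds in two stages. First, each unordered multiset $\{c_a, c_b\}$ with $c_a + c_b = \mu/2$ contributes at most $8$ ordered pairs to the multiplicity, by the two symmetries $k \mapsto N - k$ and the coordinate swap. Second, and this is the crux, I will show that the number of such distinct multisets is at most $3$. Case~(i) of Wlodarski, combined with $\mu \neq 0$, yields only the trivial multiset $\{c_{k_1}, c_{k_2}\}$. Case~(ii), the one-parameter family containing $\pi/2$, contributes at most one additional multiset: for each $\delta \in [0, \pi/3]$ giving the correct sum, the three partitions of the angle quadruple into ``positive'' and ``negative'' pairs all produce the same pair of multisets after identification. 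Each sporadic case~(iii)--(vii) corresponds to fixed rational angles and contributes at most one extra multiset per matching $\mu$; a case-by-case check shows that whenever cases~(v)--(vii) apply, their multisets coincide with ones already produced by cases~(ii)--(iv). This caps the total at three multisets and yields $m_{T^2_N}(k_1, k_2) \leq 3 \cdot 8 = 24$.

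Optimality is witnessed by $N = 60$ with $\mu = 2\cos(\pi/5)$: the three multisets $\{\cos(\pi/5), 0\}$, $\{\cos(2\pi/15), \cos(8\pi/15)\}$, and $\{\cos(2\pi/5), 1/2\}$ all have sum $\cos(\pi/5)$ — the first two arising from case~(ii) with $\delta = \pi/5$, and the third from case~(iii). Since no cosine in any of these multisets equals $\pm 1$, each multiset supplies exactly $8$ ordered pairs, totaling $24$. The hardest step will be the uniform bound of $3$ on the number of multisets, especially verifying that cases~(v)--(vii) cannot contribute a fourth distinct multiset for any eigenvalue; this will reduce to enumerating the attainable sum values in each Wlodarski case and confirming that all coincidences collapse to multisets already accounted for by cases~(ii), (iii), or (iv).
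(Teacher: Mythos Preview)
Your proposal is correct and follows essentially the same approach as the paper: both reduce the multiplicity count to bounding the number of unordered pairs $\{\alpha,\beta\}\subset[0,\pi]$ with $\cos\alpha+\cos\beta=\mu/2$, invoke Wlodarski's classification to show there are at most three such pairs, and multiply by the symmetry factor $8$ to obtain $24$, with optimality at $N=60$. The paper's proof is terser---it simply asserts that for fixed $(\alpha_1,\alpha_2)$ there are at most two non-trivial completions $(\alpha_3,\alpha_4)$---whereas you spell out the case split (type~(ii) versus the sporadic types) and choose a different eigenvalue $\mu=2\cos(\pi/5)$ for the optimality witness (the paper uses $\mu=-2\cos(2\pi/5)$, which falls under types~(iii) and~(vi) rather than your~(ii) and~(iii)); both are valid.

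One small imprecision: your justification that ``for each $\delta$ \dots\ the three partitions \dots\ all produce the same pair of multisets'' is not quite the right statement---for a fixed $\delta$ the three partitions have \emph{different} pair-sums, so only one matches $\mu/2$. The correct observation (which you likely intend) is that the different admissible values of $\delta$ yielding pair-sum $\mu/2$ all produce the \emph{same} two multisets; e.g.\ for $\mu/2\in(1/2,1)$ the solutions $\delta_1=\arccos(\mu/2)$ and $\delta_2=\pi/3-\delta_1$ give identical multiset pairs. This is a phrasing issue, not a gap in the argument.
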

	\begin{proof}
		We only need to consider the case that $N$ is even by Proposition \ref{general_multiplicities}. 
		Given $\mu(k_1,k_2)=2\cos\frac{2k_1}{N}\pi+ 2\cos\frac{2k_2}{N}\pi \neq 0$ and $k_1\neq k_2$, we know that, for any $k'$ and $k''$ with $\{k_1,k_2\}\cap \{k',k''\}=\emptyset$, a relation
		\begin{align}\label{eq:eigen_eq_2}
			2\cos\frac{2k_1\pi}{N}+ 2\cos\frac{2k_2\pi}{N}  =2\cos\frac{2k'\pi}{N}+ 2\cos\frac{2k''\pi}{N} \end{align}
		will add the multiplicity of $\mu(k_1,k_2)$ at most $8$.
		This equation \eqref{eq:eigen_eq_2} can be reformulated as 
		\begin{align}\label{eq:VS_cos_4}
			\begin{cases}
				2\cos\frac{2k_1}{N}\pi+ 2\cos\frac{2k_2}{N}\pi +2\cos\frac{2k_3}{N}\pi+ 2\cos\frac{2k_4}{N}\pi=0\\
				2\cos\frac{2k_1}{N}\pi+ 2\cos\frac{2k_2}{N}\pi \neq 0\\
				2\cos\frac{2k_1}{N}\pi+ 2\cos\frac{2k_3}{N}\pi \neq 0.
			\end{cases}
		\end{align}
		Note that $\frac{2k_3}{N}\pi=\pi-\frac{2k'}{N}\pi$, the assumption $\{k_1,k_2\}\cap \{k',k''\}=\emptyset$ implies the last inequality above.	In fact the last two inequalities in \eqref{eq:VS_cos_4} are equivalent to say that  any sum of two out of the four terms in the first equation is non-zero.  
		
		Now, for a given $(k_1,k_2)$, we  search for $(k_3,k_4)$ with $0\leq \frac{2k_3}{N}\pi,\frac{2k_4}{N}\pi\leq \pi$ satisfying \eqref{eq:VS_cos_4}. Therefore, by Theorem \ref{Woldarski} of Wlodarski, 
		we  have at most 2 choices of $(k_3, k_4)$ which satisfies \eqref{eq:VS_cos_4}. Therefore 
		\[   m_{T^2_N}(k_1,k_2)\leq 8+2\times 8=24.\]
		
		For example, if $(\frac{2k_1}{N}\pi,\frac{2k_2}{N}\pi)=(\frac{4}{5}\pi, \frac{1}{3}\pi)$, the solutions of $(\frac{2k_3}{N}\pi,\frac{2k_4}{N}\pi)$ are $(\frac{2}{5}\pi,\frac{1}{2}\pi)$ and $(\frac{1}{15}\pi,\frac{11}{15}\pi)$.  Hence in the case that $N=60$, we have 
		\[m_{T^2_{60}}(24,10)=24.\]
		This implies the bound $24$ is optimal.
	\end{proof}
	
	\begin{example}
		Here we list all higher multiplicities (bigger than $8$) of eigenvalues of $T^2_{60}$ in the following table:
		\begin{center}

			\begin{tabularx}{135mm}{c|X}\hline
				Multiplicities & Eigenvalues  of $T^2_{60}$ \\ \hline 
				12 & $ \pm(2\cos\frac{\pi}{5}+1), \pm(2\cos\frac{2}{5}\pi-1)$\\ \hline
				16&   $\pm(2\cos\frac{\pi}{15}+1)$\\ \hline
				20 &  $\pm 1$ \\ \hline
				24 &  $\pm 2\cos\frac{\pi}{5},\pm 2\cos\frac{2}{5}\pi$ \\ \hline
				118 & 0\\ \hline
			\end{tabularx}
			\end{center}
				
	\end{example}

	\section{Vanishing sums of cosines}

	For any integer $N\geq 3$, let   $T^d_N=C_N\times\cdots \times C_N$ be the $d$-dimensional 
	discrete torus. It is a Cayley graph associated to the group $(\Z/N\Z)^d$ with the generating set 
	$ \{\pm e_1, \dots, \pm e_d\}$, where $e_i$ is the canonical element with $1$ at the $i$-th coordinate and $0$ at all other coordinates.
	
	The eigenvalues of $T^d_N$ are the sum 
	\[ 2\sum_{j=1}^{d}\cos\frac{2k_j\pi}{N},\]
	for $0\leq k_j\leq \lfloor \frac{N}{2}\rfloor, \forall j$.  A natural question is to calculate the multiplicities of these eigenvalues.
	In general, the multiplicity may increase once there is an equality
	\[ 2\sum_{j=1}^{d}\cos\frac{2k_j\pi}{N}= 2\sum_{j=1}^{d}\cos\frac{2k'_j\pi}{N},\]
	for $k\neq k' \in \Z^d$. Note that $2\cos(\alpha)=e^{i\alpha}+e^{-i\alpha}$ for any $\alpha\in \mathbb{R}$, we may rewrite this equality as a vanishing sum of roots of unity
	\[ \sum_{j=1}^{d}(e^{\pi i\frac{2k_j}{N}}+e^{-\pi i\frac{2k_j}{N}}+e^{\pi i\frac{N-2k'_j}{N}}+ e^{-\pi i\frac{N-2k'_j}{N}})= 0.\]
	Roughly speaking, the multiplicities of eigenvalues are determined by the numbers of vanishing sums of roots of unity. 
		
	Let $U_{\infty}=\{z\in \mathbb{C}| z^n=1 \;\text{for some } n\in \mathbb{Z}_{>0}\}$ be the group of roots of unity, and for any positive integer $N$, $U_{N}=\{z\in \mathbb{C}| z^N=1 \}$ be the subgroup of $N$-th roots of unity. Let $\mathbb{C}[U_\infty]$ be the group ring and $v:  \mathbb{C}[U_\infty]\to \mathbb{C}$ be the nature map by taking the value of  a formal sum.
	\begin{definition}
		\begin{itemize}
			\item[(i)] A vanishing sum of roots of unity is a formal sum $$M=a_1\zeta_1+a_2\zeta_2+\cdots+a_k\zeta_k\in \mathbb{C}[U_\infty]$$ with $v(M)=0$. We also write $a_1\zeta_1+a_2\zeta_2+\cdots+a_k\zeta_k=0$ for short.
			\item[(ii)] For a vanishing sum $M=a_1\zeta_1+a_2\zeta_2+\cdots+a_k\zeta_k\in \mathbb{N}[U_\infty]$, let $\varepsilon_0(M)$ be the cardinality of the set $\{j|a_j\neq 0\}$ and the length $\varepsilon(M)$  of $M$ is the sum of such nonzero coefficients.
			\item[(iii)]  A vanishing sum of roots of unity $M=\sum\limits_{j=1}^{k}a_j\zeta_j=0$ with the coefficients $a_j\in \mathbb{Z}_{>0}$ is called minimal if 
			\[ \sum_{j=1}^kb_k\zeta_k=0, \; a_j\geq b_j\in \mathbb{N}\]
			implies either $a_j=b_j$ for all $j$ or $b_j=0$ for all $j$. In other words, $M$ is minimal if it cannot be written as two non-trivial (i.e., all coefficients are zero) vanishing sums of roots of unity.
		\end{itemize}
	\end{definition}
	\begin{example}\label{ex_symmetric}
		Let $p$ be an odd prime and $\zeta_p$ be a primitive $p$-th root of unity.  The sum $R_p:=1+\zeta_p+\cdots+\zeta_p^{p-1}$ is a minimal vanishing sum of roots of unity.  So is any rotation $\zeta R_p$ by  $\zeta\in U_\infty$. This kind of vanishing sums is called symmetric. The other minimal vanishing sums of roots of unity in $\mathbb{Z}_{>0}[U_\infty]$ are called asymmetric, for example $(\zeta_3+\zeta_3^2)(\zeta_5+\zeta_5^2+\zeta_5^3+\zeta_5^4)+\zeta_7+\zeta_7^2+\zeta_7^3+\zeta_7^4+\zeta_7^5+\zeta_7^6$ is an asymmetric vanishing sum of roots of unity.
	\end{example}

	\begin{theorem}\label{Lam-Leungthm}
		Let $N=p_1^{a_1}\cdots p^{a_s}_s$ with $p_1<p_2<\dots<p_s$ are distinct primes. 
		\begin{itemize}
			\item[(i)]{(\cite[Theorem 4.8]{LL})} Let $M$ be a minimal vanishing sum in  $\mathbb{Z}_{>0}[U_N]$. Then either $M$ is symmetric or
			$\varepsilon(M)\geq \varepsilon_0(M)\geq (p_1-1)(p_2-1)+p_3-1$.
			\item[(ii)] {(\cite[Theorem 5.2]{LL})} Let 
			\[ W(N)=\{\varepsilon(M)\;|\; M\in \mathbb{N}[U_N] \text{ is a vanishing sum of roots of unity}\}.\]
			Then $W(N)=\sum\limits_{j=1}^{s}\mathbb{N}p_j.$
		\end{itemize}
	\end{theorem}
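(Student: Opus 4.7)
The inclusion $\sum_j \mathbb{N} p_j \subseteq W(N)$ of part (ii) is immediate: $R_{p_j} = 1 + \zeta_{p_j} + \cdots + \zeta_{p_j}^{p_j - 1}$ is a vanishing sum of length $p_j$ lying in $\mathbb{N}[U_N]$, and $\mathbb{N}$-linear combinations of the $R_{p_j}$ realise every element of $\sum_j \mathbb{N} p_j$ as a length. The plan is to treat the reverse inclusion in (ii) and part (i) together through a structural analysis of $\ker \rho$, where $\rho \colon \mathbb{Z}[U_N] \to \mathbb{Z}[\zeta_N] \subset \mathbb{C}$ is the ring homomorphism sending each root of unity to itself.

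The first step is a structural lemma: $\ker \rho$ is generated as a $\mathbb{Z}$-module by the translates $\{\eta R_p : p \mid N \text{ prime and } \eta \in U_N\}$. I would prove this by induction on the number of prime divisors of $N$ counted with multiplicity, using the tower of cyclotomic fields $\mathbb{Q}(\zeta_{N/p}) \subset \mathbb{Q}(\zeta_N)$ together with a rank count: since $\mathbb{Z}[\zeta_N]$ has $\mathbb{Z}$-rank $\varphi(N)$, the kernel has rank $N - \varphi(N)$, matching the span of the proposed generators (a verification one can already see explicitly for $N = pq$, where the $p+q$ translates satisfy exactly one relation $\sum_i \eta_i R_p = \sum_j \eta'_j R_q = \sum_{\zeta \in U_N} \zeta$). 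Extending the augmentation $\varepsilon$ to all of $\mathbb{Z}[U_N]$ (so that $\varepsilon$ agrees with length on $\mathbb{N}[U_N]$), every element of $\ker \rho$ then has augmentation in $\sum_j \mathbb{Z} p_j$. To promote this to $\sum_j \mathbb{N} p_j$ when $M \in \mathbb{N}[U_N]$, I would argue by strong induction on $\varepsilon(M)$: if $M - \eta R_p \in \mathbb{N}[U_N]$ for some prime $p \mid N$ and some $\eta \in U_N$, subtract and apply the inductive hypothesis to reduce $\varepsilon$ by $p$; otherwise $M$ is itself minimal and part (i) takes over.

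For (i), let $M \in \mathbb{Z}_{>0}[U_N]$ be minimal and non-symmetric. The plan is first to pass, via the structural lemma, to the squarefree quotient $\mathbb{Z}[U_{p_1 p_2 \cdots p_s}] \cong \bigotimes_j \mathbb{Z}[U_{p_j}]$, and then view the support of $M$ as a multi-index array indexed by $U_{p_1} \times \cdots \times U_{p_s}$. Projecting the vanishing condition onto the $p_1$- and $p_2$-coordinates, minimality forbids any entire $p_1$- or $p_2$-fibre from being a rotate of $R_{p_1}$ or $R_{p_2}$; a double count of the surviving entries then forces at least $(p_1 - 1)(p_2 - 1)$ supporting positions. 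Non-symmetry forces a third direction ($p_3$) to contribute at least $p_3 - 1$ further terms. The main obstacle is precisely this final combinatorial step: converting the abstract minimality and non-symmetry hypotheses into a tight count of supporting roots of unity without collapsing into a long case analysis driven by the interaction of the different prime factors. The sharp example $(\zeta_3 + \zeta_3^2)(\zeta_5 + \zeta_5^2 + \zeta_5^3 + \zeta_5^4) + \zeta_7 + \zeta_7^2 + \cdots + \zeta_7^6$ from Example \ref{ex_symmetric}, of length exactly $(3-1)(5-1) + 7 - 1 = 14$, shows the bound is tight and that no coarser argument can succeed.
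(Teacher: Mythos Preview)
The paper does not prove this theorem: both parts are stated with citations to Lam--Leung \cite{LL} and are used as black boxes throughout. There is therefore no proof in the paper to compare your proposal against.

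Assessed on its own, your sketch has genuine gaps. For (ii), the dichotomy ``either $M-\eta R_p\in\mathbb{N}[U_N]$ for some $\eta,p$, or $M$ is minimal'' is false: take $M=2M_1$ with $M_1$ the asymmetric length-$14$ sum of Example~\ref{ex_symmetric}; this $M$ is not minimal, yet its support contains no full translate $\eta R_p$ for any $p\in\{3,5,7\}$. The induction you actually want is simply ``if $M$ is not minimal, write $M=M_1+M_2$ and apply the hypothesis to each piece.'' Even after that fix, when you land on a minimal non-symmetric $M$ and say ``part (i) takes over,'' part (i) only supplies the lower bound $\varepsilon(M)\ge(p_1-1)(p_2-1)+p_3-1$, not membership in $\sum_j\mathbb{N}p_j$; you still need the Sylvester--Frobenius observation that every integer $\ge(p_1-1)(p_2-1)$ already lies in $\mathbb{N}p_1+\mathbb{N}p_2$ to close the argument. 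For (i), you have correctly located the reduction to squarefree $N$ and the fibrewise obstruction coming from minimality, but the passage from ``no fibre is a full rotate of $R_{p_i}$'' to the count $(p_1-1)(p_2-1)$ is not a double count in any straightforward sense and is precisely where Lam--Leung's argument does its real work; your proposal does not supply this step, as you yourself acknowledge.
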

	
	An immediate consequence of the above theorem is the following:
	\begin{lemma}\label{dim_prime_decomp}
		For $d\geq 1$ and $N=p_1^{a_1}\cdots p^{a_s}_s$ with $p_1,\dots,p_s$ are distinct primes.
		If zero is an eigenvalue of $T^d_N$, then  $2d\in \mathbb{N}p_1+\cdots+\mathbb{N}p_s$.
	\end{lemma}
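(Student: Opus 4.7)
The plan is to translate the hypothesis that $0$ is an eigenvalue of $T^d_N$ directly into the existence of a vanishing sum in $\mathbb{N}[U_N]$ of length exactly $2d$, and then invoke part (ii) of Theorem \ref{Lam-Leungthm}. First I would use that $0$ being an eigenvalue means there exist $k_1,\dots,k_d\in\Z/N\Z$ with $2\sum_{j=1}^{d}\cos\frac{2k_j\pi}{N}=0$. Writing $\zeta_N=e^{2\pi i/N}$ and $2\cos\theta=e^{i\theta}+e^{-i\theta}$, this identity rewrites as the formal sum
$$M\;:=\;\sum_{j=1}^{d}\bigl(\zeta_N^{k_j}+\zeta_N^{-k_j}\bigr)\in\mathbb{N}[U_N],$$
which satisfies $v(M)=0$. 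The total count of monomials is $2d$, and since all coefficients contributed are $+1$ there is no cancellation when like terms are merged; hence $\varepsilon(M)=2d$.

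Second, I would apply Theorem \ref{Lam-Leungthm}(ii), which identifies the set of possible lengths of vanishing sums in $\mathbb{N}[U_N]$ with the numerical monoid $W(N)=\sum_{j=1}^{s}\mathbb{N}p_j$. Since $2d=\varepsilon(M)\in W(N)$, the conclusion $2d\in \mathbb{N}p_1+\cdots+\mathbb{N}p_s$ follows immediately.

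There is no real obstacle here beyond bookkeeping: the only subtlety is making sure that passing from the cosine identity to a formal group-ring element does not hide a sign cancellation, which it does not because each $2\cos(2k_j\pi/N)$ contributes two roots of unity with coefficient $+1$. Thus the lemma is essentially a reformulation of Lam--Leung's characterization of lengths of vanishing sums, applied to the particular palindromic sum $M$ coming from cosines. One could note in passing that the converse is not claimed: writing $2d$ as a non-negative combination of the $p_j$ does not by itself guarantee that $0$ is realized as an eigenvalue of $T^d_N$ — that will be the substance of Proposition \ref{zero_eigenvalue_critierion} later in the paper.
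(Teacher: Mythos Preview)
Your proof is correct and follows essentially the same approach as the paper: rewrite the vanishing cosine sum as a vanishing sum of $N$-th roots of unity of length $2d$ via $2\cos\theta=e^{i\theta}+e^{-i\theta}$, then invoke Theorem~\ref{Lam-Leungthm}(ii). Your explicit remark that all contributed coefficients are $+1$ so that $\varepsilon(M)=2d$ even after combining like terms is a point the paper leaves implicit.
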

	
	\begin{proof}
		
		Since any eigenvalue of  $T^d_N$ can be written as $2\sum\limits_{j=1}^{d}\cos\frac{2k_j\pi}{N}$ for some $k_j\in \mathbb{Z}_{\geq 0}$, we may assume that  
		$2\sum\limits_{j=1}^{d}\cos\frac{2k_j\pi}{N}=0$.  Using the identity $2\cos(\alpha)=e^{i\alpha}+e^{-i\alpha}$, we get a vanishing sum of roots of unity
		\[ \sum_{j=1}^{d}(e^{\frac{2ik_j\pi}{N}}+e^{\frac{-2ik_j\pi}{N}})=0.\]
		
		It is a vanishing sum of roots of unity of length $2d$. Therefore the claim follows from Theorem \ref{Lam-Leungthm}.
	\end{proof}	 
	
	Note that if $2\mid N$, then the condition 	 $2d\in \mathbb{N}p_1+\cdots+\mathbb{N}p_s$ is automatically satisfied. Therefore this condition is only for odd  $N$. In the next section we give an explicit criterion for zero being an eigenvalue.
	
	To give a uniform bound of multiplicities, we also need the following Theorem of Everste.
	\begin{theorem}{(\cite[Theorem]{Evertse})} \label{Evertsethm}
		Let $ a_1\zeta_1+\cdots+a_k\zeta_k\in \mathbb{C}[U_\infty]$ with $\prod_{j=1}^{n}a_j\neq 0$. Then the equation 
		\[ a_1\zeta_1+\cdots+a_k\zeta_k=1\]
		has at most $(k+1)^{3(k+1)^2}$ non-degenerated solutions $(\zeta_1,\dots,\zeta_k)$. That is, for any proper and non-empty subset $J$ of \{1,\dots,k\}, the solution $(\zeta_1,\dots,\zeta_k)$ satisfies \[\sum_{j\in J}a_j\zeta_j \neq 0.\]
	\end{theorem}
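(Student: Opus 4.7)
The plan is to reduce the problem to a statement about $S$-unit equations in a suitably chosen number field and then invoke a quantitative form of Schmidt's subspace theorem. First, setting $x_j := a_j\zeta_j$ turns the equation into the normalized form $x_1+\cdots+x_k=1$, while the non-degeneracy hypothesis becomes precisely the condition that no proper subsum vanishes. Since each $\zeta_j$ is a root of unity, every $x_j$ is an $S$-unit once $S$ is taken to contain the archimedean places and the primes dividing the $a_j$ in any number field containing $a_1,\dots,a_k$ together with enough roots of unity to realise the solution.

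Next I would handle the fact that the $\zeta_j$ range over the infinite group $U_\infty$. Fixing $K=\mathbb{Q}(a_1,\dots,a_k)$, any $\mathrm{Gal}(\overline{\mathbb{Q}}/K)$-automorphism sends solutions to solutions, so it suffices to bound the number of Galois orbits of non-degenerate solutions. An induction on $k$ disposes of degenerate subsums: these split into shorter vanishing sums to which the same theorem applies with smaller parameters, so the genuinely new content is the non-degenerate case.

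The central step is to apply the Evertse--Schlickewei--van der Poorten quantitative subspace theorem to the linear equation $a_1 y_1+\cdots+a_k y_k = y_{k+1}$ with $(y_1,\dots,y_{k+1})$ a tuple of $S$-units. The theorem forces the non-degenerate solutions, viewed as points in projective space, to lie in a finite union of proper linear subspaces whose number depends only on $k$ and the cardinality $|S|$. The exponent $3(k+1)^2$ in the bound $(k+1)^{3(k+1)^2}$ then arises by combining the standard estimate for the number of $S$-units on a line with iterated applications of the subspace theorem across the finitely many exceptional subspaces.

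The main obstacle, I expect, is obtaining a constant that is truly independent of the particular coefficients $a_j$ and of the conductors of the roots of unity involved; the naive application of the subspace theorem gives bounds that depend on $|S|$. One must therefore argue that enlarging $S$ (equivalently, admitting roots of unity of ever larger conductor) cannot produce new non-degenerate solutions once the universal bound has been saturated, and this uniformity is exactly the technical heart of Evertse's argument, achieved through a careful choice of auxiliary valuations combined with the inductive reduction on $k$ mentioned above.
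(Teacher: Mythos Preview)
This theorem is not proved in the paper at all: it is quoted verbatim from Evertse's 1999 \emph{Acta Arithmetica} paper and used as a black box (see the citation tag in the statement itself). The authors invoke it only to bound the number of non-degenerate solutions in Lemma~\ref{FMVS_fix_one} and in the proof of Theorem~\ref{all_infinity_iff}; they make no attempt to reprove it. So there is no ``paper's own proof'' to compare your proposal against.

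That said, your sketch is in the right circle of ideas for Evertse's original argument---the quantitative subspace theorem applied to the unit equation $x_1+\cdots+x_k=1$ is indeed the engine. But note two points. First, the striking feature of Evertse's bound is that it depends only on $k$, not on the coefficients $a_j$, not on the number field, and not on $|S|$; your outline acknowledges this difficulty but does not resolve it, since the standard subspace-theorem bounds you invoke \emph{do} depend on $|S|$. Evertse's actual proof handles this via a specialization/reduction argument that controls the heights of solutions uniformly, not merely by ``a careful choice of auxiliary valuations.'' Second, the Galois-orbit reduction you propose does not by itself give a uniform bound on the number of solutions, only on the number of orbits, and orbit sizes can be arbitrarily large. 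For the purposes of the present paper none of this matters: you should simply cite Evertse's result and move on.
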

	
	\begin{example}
		We don't know how to introduce the notion of  minimal or non-degenerated for sums of cosines. For example, $2\cos 0+2\cos\frac{2\pi}{3}=1$ is a sum of cosines, which should be thought as ``non-degenerated" as an eigenvalue of $T^2_3$, but it comes from a degenerated sum of roots of unity $1+1+\zeta_3+\zeta_3^2=1$. 
	\end{example}
	
	Any vanishing sum of roots of unity can be written as a sum of minimal vanishing sums, therefore lots of questions on vanishing sums of roots of unity can be reduced to study the minimal ones. However, the above example shows that it is difficult to define minimal or non-degenerated for sums of cosines, so we need find another way to relate them. The following Lemma is an attempt in this direction.
	
	\begin{lemma} \label{conj_formal_sum}
		Let $N$ be an odd positive integer and $e^{i\alpha_{1}},\cdots,e^{i\alpha_{d}}$ be $N$-th roots of unity.
		Assume the vanishing sum 
		\begin{align}
			\sum_{j=1}^{d} (e^{i\alpha_j}+e^{-i\alpha_j})=\sum_{k=1}^{r}\eta_{k}R_{p_k} \in \mathbb{C}[U_N],
		\end{align}
		where $|\eta_{k}|=1$ and  $R_{p_k}=1+\zeta_{p_k}+\cdots+\zeta_{p_k}^{p_k-1}$ where each prime $p_k \mid N$ and $p_1,\cdots, p_r$ are distinct. We further assume that each $\eta_{k}$ is not a $p_k$-th roots of unity if $\eta_{k}\neq 1$. Then we have
		\[ \eta_1=\cdots=\eta_r=1.\] 
	\end{lemma}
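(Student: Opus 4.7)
The plan is to exploit the inversion symmetry of the left-hand side and then apply carefully chosen characters of $U_N$ to isolate each summand on the right, one at a time. First, I would observe that the involution $\sigma$ of $\mathbb{C}[U_N]$ extending $g \mapsto g^{-1}$ fixes the LHS, since its terms come in antipodal pairs $\{e^{i\alpha_j}, e^{-i\alpha_j}\}$, and it also fixes every $R_{p_k}$, whose support $\langle \zeta_{p_k}\rangle$ is a subgroup (hence closed under inversion). Applying $\sigma$ to the given identity therefore produces $\sum_k \eta_k R_{p_k} = \sum_k \eta_k^{-1} R_{p_k}$, i.e.,
\[
\sum_{k=1}^{r}(\eta_k - \eta_k^{-1})\, R_{p_k} = 0 \quad \text{in } \mathbb{C}[U_N].
\]

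Next, for any character $\chi$ of $U_N$, I would apply $\chi$ (as a ring homomorphism) to this identity. Writing $H_k := \langle \zeta_{p_k}\rangle$, one has $\chi(R_{p_k}) = p_k$ if $\chi|_{H_k} = 1$ and $\chi(R_{p_k}) = 0$ otherwise. Since $|\eta_k| = 1$ gives $\chi(\eta_k)^{-1} = \overline{\chi(\eta_k)}$ and hence $\chi(\eta_k) - \chi(\eta_k)^{-1} = 2i\,\mathrm{Im}(\chi(\eta_k))$, the identity reduces to the scalar relation
\[
\sum_{k\,:\,\chi|_{H_k} = 1} p_k\,\mathrm{Im}(\chi(\eta_k)) = 0.
\]

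The crucial step is now, for each fixed $k_0$, to produce a character $\chi_{k_0}$ for which $\{k : \chi_{k_0}|_{H_k} = 1\}$ is exactly $\{k_0\}$. I would take $\chi_{k_0}$ determined by $\chi_{k_0}(\zeta_N) = \zeta_{N/p_{k_0}}$, a primitive $(N/p_{k_0})$-th root of unity; an easy computation shows $\chi_{k_0}(\zeta_{p_{k_0}}) = 1$, whereas $\chi_{k_0}(\zeta_{p_k})= \zeta_{N/p_{k_0}}^{N/p_k} \neq 1$ for $k \neq k_0$, since this would otherwise force $p_k \mid p_{k_0}$, impossible for distinct primes. Plugging this into the scalar relation yields $p_{k_0}\,\mathrm{Im}(\chi_{k_0}(\eta_{k_0})) = 0$. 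Because $N$ is odd, the order $N/p_{k_0}$ of $\chi_{k_0}$ is also odd, so $\chi_{k_0}(\eta_{k_0})$ is a real root of unity of odd order, forcing $\chi_{k_0}(\eta_{k_0}) = 1$. Equivalently $\eta_{k_0} \in \ker \chi_{k_0} = H_{k_0}$, and the standing hypothesis (that $\eta_{k_0} \neq 1$ implies $\eta_{k_0} \notin H_{k_0}$) then yields $\eta_{k_0} = 1$.

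The main obstacle is conceptual rather than computational: one has to recognize that distinctness of the primes $p_k$ is exactly the condition allowing a single character of $U_N$ to pick out a single $R_{p_k}$, and that the oddness of $N$ is precisely what upgrades the "real" conclusion from the Fourier step to "equal to $1$". Once these structural observations are identified, the verifications are short.
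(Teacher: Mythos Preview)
Your proof is correct and takes a genuinely different route from the paper's.

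The paper argues combinatorially. After reducing to the case where every $\eta_k\neq 1$, it observes that the left-hand side is invariant under $g\mapsto g^{-1}$, so the conjugate of every term $\eta_{k_0}\zeta_{p_{k_0}}^a$ must occur somewhere among the $\eta_k R_{p_k}$. It then shows (i) that no such conjugate lands in $\eta_{k_0}R_{p_{k_0}}$ itself, and (ii) that at most one conjugate can land in any fixed $\eta_l R_{p_l}$ with $l\neq k_0$. Taking $p_{k_0}$ to be the largest of the primes (which exceeds $r$), a pigeonhole contradiction ensues.

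Your approach is harmonic-analytic: you extract the clean group-ring identity $\sum_k(\eta_k-\eta_k^{-1})R_{p_k}=0$ from the same inversion symmetry and then apply a carefully chosen character $\chi_{k_0}$ (with $\chi_{k_0}(\zeta_N)$ a primitive $(N/p_{k_0})$-th root of unity) to annihilate all summands except the $k_0$-th. This yields $\chi_{k_0}(\eta_{k_0})\in\mathbb{R}$, and since $N$ is odd the only real root of unity in the image is $1$, forcing $\eta_{k_0}\in\ker\chi_{k_0}=\langle\zeta_{p_{k_0}}\rangle$ and hence $\eta_{k_0}=1$ by the standing hypothesis. The verifications (well-definedness of $\chi_{k_0}$, $\chi_{k_0}(\zeta_{p_k})\neq 1$ for $k\neq k_0$, and $\ker\chi_{k_0}=H_{k_0}$) are all sound.

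What each buys: the paper's argument is entirely elementary, needing only a pigeonhole count and the observation that $r$ distinct odd primes have maximum exceeding $r$. Your argument is more structural and isolates each $\eta_{k_0}$ directly without any extremal choice or counting; it also makes transparent exactly where the two hypotheses enter (distinctness of the $p_k$ enables separation by a single character; oddness of $N$ upgrades ``real'' to ``equal to $1$''). Either proof would serve; yours arguably exposes the mechanism more cleanly.
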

	\begin{proof} Note that if $\eta_k=1$, we may then subtract $R_{p_k}$ on both sides and reduce the length of vanishing sums. Hence in the following we assume all
		$\eta_{k}\neq 1$. We deduce a contradiction in following steps:

		(i) For any $0\leq a< p_k$, the conjugate of $\eta_{k}\zeta_{p_k}^a$ does not appear in $\eta_{k}R_{p_k}$. Otherwise, we assume that $\overline{\eta_{k}\zeta_{p_k}^a}=\eta_{k}\zeta_{p_k}^b$. It follows that $\eta_k^{2p_k}=1$, which contradicts to the assumption that $N$ is odd and $\eta_k\neq 1$.

		(ii) For any two integers $a,b$ such that $0\leq a\neq b<p_k$, the conjugates of $\eta_k\zeta_{p_k}^a$ and $\eta_k\zeta_{p_k}^b$ cannot appear in any $\eta_lR_{p_l}$ simultaneously for $l\neq k$.  By taking contradiction, without loss of generality, we assume that  $\overline{\eta_1\zeta_{p_1}^a}= \eta_2\zeta_{p_2}^c$ and $\overline{\eta_1\zeta_{p_1}^b}= \eta_2\zeta_{p_2}^d$  for some $0\leq c\neq d \leq p_2$. This implies $\zeta_{p_1}^{b-a}=\zeta_{p_2}^{c-d}$, which is impossible.

		(iii) Assume $p_{k_0}=\mathrm{max}\{p_1,\dots,p_r\}>r$, consider the conjugates of each $\eta_{k_0}\zeta_{p_{k_0}}^{a_j}$ for $0\leq a_j<p_{k_0}$, there are $p_{k_0}$ elements, but they appear in at most $r-1$  classes $\{\eta_{k}R_{p_k}, k=1,\dots,r, k\not= k_0\}$, which contradicts to (ii) since $p_{k_0}>r$.
	\end{proof}

	\section{Multiplicities of zero eigenvalues of $T^d_N$}
	We have seen that the zero eigenvalue in $T^2_N$ plays a special role in the spectrum. For $\mu\in \sigma(T^d_N)$, it may happen that $\mu$ is also an eigenvalue of a lower dimension discrete torus $T^{d-r}_N$ for some $r\geq 1$. Then $0$ is an eigenvalue of $T^r_N$ in this case. We will show that all large multiplicities are caused by large multiplicities of the zero eigenvalue in lower dimensions in Theorem \ref{all_infinity_iff}. In this section, we treat the zero eigenvalue first and general eigenvalues in the next section.
	\subsection{Criterion for zero eigenvalue}
	For any prime number $p$ and $\delta\in \mathbb{Q}\pi$,  the vanishing of $e^{i\delta}(1+\zeta_p+\zeta_p^2+\dots+\zeta_p^{p-1})$ gives us the following vanishing sum of cosines:
	\begin{align*}
		C_p(\delta):=\cos\delta+\sum_{j=1}^{\frac{p-1}{2}}\big(\cos(\frac{2j\pi}{p}+\delta)+\cos(\frac{2j\pi}{p}-\delta)\big).
	\end{align*}
	The sum $2C_p(\delta)$ is  a vanishing sum with $p$ terms of eigenvalues of $\Z/N\Z$ for some $N\in \mathbb{N}$.

	\begin{proposition}\label{zero_eigenvalue_critierion}
		For $d\geq 1$ and $N=p_1^{a_1}p_2^{a_2}\cdots p^{a_s}_s$ with $p_1< p_2<\dots<p_s$ are distinct primes.
		\begin{itemize}
			\item[(i)] If $2 \nmid N $, then $0$ is an eigenvalue of $T^d_N$ if and only if $2d\in \mathbb{N}p_1+\cdots+\mathbb{N}p_s$.
			\item[(ii)] If $2 \mid N $ and  $d$ is even, then $0$ is always an eigenvalue of $T^d_N$.
			\item[(iii)] If $2\mid N$ and $d$ is odd and if $p_2 \leq d$, then $0$ is an eigenvalue of $T^d_N$.
			\item[(iv)]  If $2\mid N$ and $d$ is odd and if $p_2>d$, then $0$ is an eigenvalue of $T^d_N$ if and only if $4\mid N$.
		\end{itemize}
	\end{proposition}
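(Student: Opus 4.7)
The four parts split into explicit constructions (for (i)--(iii) plus the forward direction of (iv)) and a nonexistence argument (for the reverse direction of (iv)). Throughout I will use the dictionary between identities $\sum_{j=1}^d \cos(2k_j\pi/N)=0$ and self-conjugate vanishing formal sums
\[
S \;=\; \sum_{j=1}^d \bigl(e^{2ik_j\pi/N}+e^{-2ik_j\pi/N}\bigr)\;\in\;\mathbb{N}[U_N]
\]
of length $2d$, whose underlying multiset partitions into $d$ \emph{cosine pairs} $\{\zeta,\bar\zeta\}$, including the degenerate cases $\{1,1\}$ and $\{-1,-1\}$ coming from $k_j=0$ or $k_j=N/2$.

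For (i), necessity is Lemma \ref{dim_prime_decomp}. For sufficiency, if $2d=\sum_l b_l p_l$ with $N$ odd, I would take $M=\sum_l b_l R_{p_l}$: since all $p_l$ are odd, $\sum_l b_l\equiv 2d\equiv 0\pmod 2$, so the $\sum_l b_l$ copies of $[1]$ can be paired as $\{1,1\}$-pairs (each realizing $\cos 0$), while each $R_{p_l}$ contributes its $(p_l-1)/2$ natural conjugate pairs $\{\zeta_{p_l}^k,\zeta_{p_l}^{-k}\}$. A direct count gives exactly $d$ cosines, and their sum vanishes because $2\sum_{k=1}^{(p-1)/2}\cos(2k\pi/p)=-1$. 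For (ii), $d$ copies of $R_2=[1]+[-1]$ with $d$ even give $d/2$ pairs $\{1,1\}$ and $d/2$ pairs $\{-1,-1\}$. For (iii), I would combine $d-p_2$ copies of $R_2$ with $2$ copies of $R_{p_2}$; the required parities $d-p_2$ even (whence both $d-p_2$ and $d-p_2+2$ are even) hold because $d$ and $p_2$ are odd. For the forward half of (iv) with $4\mid N$, setting $k_1=N/4$ contributes $\cos(\pi/2)=0$, and the remaining $d-1$ (even) cosines are handled by (ii).

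The technical core is the converse of (iv) when $4\nmid N$. Assume for contradiction that such an $S$ exists. By Theorem \ref{Lam-Leungthm}(i) every asymmetric minimal component of $S$ has length at least $(p_1-1)(p_2-1)+p_3-1=p_2+p_3-2$; the hypothesis $p_2>d$ with $d$ odd forces $p_2\geq d+2$ and $p_3\geq d+4$ (both being odd primes), so this bound is $\geq 2d+4 > 2d$. Hence $S=\sum_\alpha \eta_\alpha R_{p_\alpha}$ is a nonnegative combination of symmetric pieces only. A length/parity check now rules out any odd $p_\alpha\geq p_2$: one such piece leaves $2d-p_\alpha$ (odd) to be assembled from $2$'s, while two pieces already overshoot $2d$. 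Therefore $S=\sum_{\alpha=1}^d\eta_\alpha R_2$, whose underlying multiset equals $E\sqcup(-E)$ with $E=\{\eta_1,\dots,\eta_d\}$ of size $|E|=d$.

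The final step is a parity argument on the orbits of the Klein four-group generated by negation and complex conjugation on $U_N$. Since $4\nmid N$, the would-be orbit $\{i,-i\}$ is absent, so the only orbit of size $2$ is $\{1,-1\}$, and every other orbit has the form $\Omega=\{\eta,-\eta,\bar\eta,-\bar\eta\}$ of size $4$. The cosine-pair property of $E\sqcup(-E)$ becomes, on such an $\Omega$, the balance $\mathrm{mult}_E(\eta)+\mathrm{mult}_E(-\eta)=\mathrm{mult}_E(\bar\eta)+\mathrm{mult}_E(-\bar\eta)$, forcing $|E\cap\Omega|$ to be even; on $\{1,-1\}$, self-conjugacy of $\pm 1$ forces $\mathrm{mult}_E(1)+\mathrm{mult}_E(-1)$ to be even. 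Summing over orbits yields $d=|E|\in 2\mathbb{Z}$, contradicting $d$ odd. The principal obstacle is orchestrating this orbit-parity bookkeeping and checking that the Lam--Leung bound kills asymmetric components precisely under the hypothesis $p_2>d$; once that is in place, the remaining parts reduce to direct constructions.
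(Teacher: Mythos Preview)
Your constructions for (i)--(iii) and the ``if'' direction of (iv) are essentially the paper's, recast in the root-of-unity language; the only cosmetic difference is that the paper writes everything via the cosine identities $C_p(\delta)$ rather than the formal sums $R_p$.

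For the ``only if'' direction of (iv), your argument is correct but genuinely different from the paper's. The paper does \emph{not} invoke the Lam--Leung structure theorem here. Instead it observes that the hypothesis $p_2>d$ means every odd prime $p\le d$ is coprime to $N$, so the Galois substitution $e^{2\pi i k_j/N}\mapsto e^{2\pi i n k_j/N}$ is legitimate for every odd $n\le d$; writing $x_j=\cos(2k_j\pi/N)$ and using Chebyshev polynomials this yields $\sum_j x_j^n=0$ for all odd $n\le d$, hence $P_n(x_1,\dots,x_d)=P_n(-x_1,\dots,-x_d)$ for all $1\le n\le d$, whence the multisets $\{x_j\}$ and $\{-x_j\}$ coincide. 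Since $d$ is odd, some $x_j=0$, forcing $4\mid N$. This is shorter and uses only Galois invariance plus symmetric-function identities, at the cost of introducing a tool (Chebyshev/Newton) not used elsewhere in the paper. Your route, by contrast, stays entirely within the Lam--Leung framework that drives the rest of the article, and the Klein-four orbit-parity count is a nice way to extract the final contradiction from $S=\sum_\alpha \eta_\alpha R_2$.

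One small gap to patch: your appeal to the asymmetric lower bound $(p_1-1)(p_2-1)+p_3-1$ tacitly assumes $s\ge 3$. You should remark that for $s\le 2$ there are no asymmetric minimal vanishing sums at all (this is \cite[Corollary~3.4]{LL}, also used in the paper's proof of Proposition~\ref{zero_infinity_iff}), so the reduction to symmetric pieces is immediate in that case. With that sentence added, the argument is complete.
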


	\begin{proof} 
		(i) The necessary part is  Lemma \ref{dim_prime_decomp}. We show the sufficient part in a constructive way. Assume $2d=\sum\limits_{l=1}^{s}b_lp_l$. Since $2\nmid N$, it implies that $2\mid \sum\limits_{l=1}^{s}b_l $. For each $p_l$,  we have  the following vanishing sum of cosines of length $p_l$:
		\[C_{p_l}(0)=\cos 0+2\cos\frac{2\pi}{p_{l}}+2\cos\frac{4\pi}{p_{l}}+\cdots+ 2\cos\frac{(p_l-1)\pi}{p_{l}}=0.\]
		Therefore
		\[ 0=(\sum\limits_{l=1}^{s}b_l )\cos0+\sum_{l=1}^{s}b_l(2\cos\frac{2\pi}{p_{l}}+2\cos\frac{4\pi}{p_{l}}+\cdots+ 2\cos\frac{(p_l-1)\pi}{p_{l}}).\]
		Note that there are $2d$ terms on the right hand side of the above equality. We rewrite it as 
		\[ 0=(\frac{1}{2}\sum\limits_{l=1}^{s}b_l )(2\cos0)+\sum_{l=1}^{s}b_l(2\cos\frac{2\pi}{p_{l}}+2\cos\frac{4\pi}{p_{l}}+\cdots+ 2\cos\frac{(p_l-1)\pi}{p_{l}}),\]
		which is a sum of $d$ eigenvalues of $\Z/N\Z$.
		Hence $0$ is an eigenvalue of $T^d_N$.
		
		(ii) Since $d$ is even and $2\mid N$, we have $p_1=2$ and write $d=2b$.  Since both $2\cos0$ and $2\cos\pi$ are eigenvalues of $\Z/N\Z$.  Hence
		\[ 0=b(2\cos0+2\cos\pi)\]
		is a sum of $d$ eigenvalues of $\Z/N\Z$. That is $0$ is an eigenvalue of $T^d_N$.

		(iii) Since $d$ is odd, then $d-p_2=2c$ for some $c\in \mathbb{N}$.  Therefore $2d=(2c)\times 2+ 2p_2$. We formulate 
		\[ 0=c(2\cos0+2\cos\pi)+2\cos0+2(2\cos\frac{2\pi}{p_{2}}+2\cos\frac{4\pi}{p_{2}}+\cdots+ 2\cos\frac{(p_2-1)\pi}{p_{2}}),\]
		which is a sum of $2c+p_2=d$ eigenvalues of $\Z/N\Z$.

		(iv) The sufficient part is easy. Since once we have $4\mid N$, then 0=$2\cos\frac{\pi}{2}=2\cos\frac{2\frac{N}{4}\pi}{N}$ is an eigenvalue of $\Z/N\Z$. Hence  $0$ is also an eigenvalue of $T^d_N$.

		Now we show the necessary part.  Assume that $0=2\sum\limits_{j=1}^{d}\cos\frac{2k_j\pi}{N}$ for some $0\leq k_j <\lfloor \frac{N}{2}\rfloor$. By our assumption, any odd prime $p\leq d$ is coprime to $N$, and it follows that 
		\[ 0=2\sum\limits_{j=1}^{d}\cos\frac{2pk_j\pi}{N}\]
		for any such prime $p$ by the natural Galois action induced by $e^{\frac{2\pi k_i}{N}}\mapsto e^{\frac{2\pi p k_i}{N}} $ on $N$-th roots of unity, and hence, for any integer $2\nmid n\leq d$, 
		\[ 0=2\sum\limits_{j=1}^{d}\cos\frac{2nk_j\pi}{N}.\]
		We write $x_j=\cos\frac{2k_j\pi}{N}$ and $\cos(n\theta)=T_n(\cos\theta)$ the Chebyshev polynomial of the first kind.
		Therefore, we have 
		\begin{equation}\label{linear_system}
			\setlength\arraycolsep{1pt}
			\left\{
			\begin{array}{rr}
				\sum\limits_{j=1}^{d}x_j=0 &  \\
				\sum\limits_{j=1}^{d}T_n(x_j)=0  &  \;\mathrm{for\; any\;} 2\nmid n\leq d.
			\end{array}
			\right.
		\end{equation}
		
		Let $ P_m(x_1,\dots,x_d)=\sum_{j=1}^{d}x_j^m$. By the property of Chevbyshev polynomials (of the first kind) and induction on $n$ with $2\nmid n\leq d$, we get
		\[P_n(x_1,\dots,x_d)=0, \mathrm{for\; any\;} 2\nmid n\leq d. \]
		Therefore, for any $1\leq n \leq d$, we have 
		\[P_n(x_1,x_2\dots,x_d)=P_n(-x_1,-x_2\dots,-x_d).\]
		Since all $P_n$ can determine the set $\{x_1,\dots, x_d\}$ (with multiplicities) uniquely, we have 
		\[	\{x_1,x_2\dots,x_d\} =\{-x_1,-x_2,\dots,-x_d\}.\] 
		By assumption that $d$ is odd, there exists $1\leq j\leq d$ such that $x_j=-x_j$, i.e., $x_j=0$. Hence there exists $j$ such that $\cos\frac{2k_j\pi}{N}=0$, which implies that $4\mid N$, as we claimed.
		
	\end{proof}

	For $N=p_1^{a_1}p_2^{a_2}\cdots p^{a_s}_s$  with $p_1< p_2<\dots<p_s$ are distinct primes. We have seen that the condition $2d\in \mathbb{N}p_1+\cdots+\mathbb{N}p_s$ is closely related whether $0$ is an eigenvalue or not. We introduce the following notation to study the multiplicity of zero eigenvalue:
	\[ I(0;N)=\Big\{r\in \mathbb{N} \Big| \;\parbox{25em}{there exist $b_1,\cdots,b_s\in \mathbb{N}$ such that $2r=\sum\limits_{l=1}^{s}b_lp_l $ with at least one of $b_l\geq 2$}\Big\}.\]
	Note that $I(0;N)=\mathbb{N}\setminus\{0,1\}$ if $2\mid N$. We will further study this subset in \S \ref{oddNset}.

	From the above proof of Proposition \ref{zero_eigenvalue_critierion}, we obtain the following result.
	\begin{corollary}\label{great_2_implies_N}
		For $d\geq 1$ and $N=p_1^{a_1}p_2^{a_2}\cdots p^{a_s}_s$ with $p_1< p_2<\dots<p_s$ are distinct primes. Assume that $0$ is an eigenvalue of $T^d_N$. If $d\in I(0;N)$, then the multiplicity	\[ m_{T^d_N}(0)\geq c_dN,\]
		for some constant $c_d>0$ which depends only on $d$.
	\end{corollary}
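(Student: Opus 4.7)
The plan is to construct, for each $\zeta \in U_N$, a length-$2d$ vanishing sum of roots of unity in $U_N$ that decomposes into $d$ conjugate pairs, and to show that varying $\zeta$ yields $\Omega(N)$ distinct tuples $(k_1,\ldots,k_d) \in (\Z/N\Z)^d$ with $\mu=0$.

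Since $d \in I(0;N)$, I would fix a decomposition $2d = \sum_l b_lp_l$ with $b_{l_0}\geq 2$; note $p_{l_0}\leq d$. For $\zeta \in U_N$, consider the formal sum
\[ M(\zeta) := \sum_{l\neq l_0} b_l R_{p_l} + (b_{l_0}-2)R_{p_{l_0}} + \zeta R_{p_{l_0}} + \bar\zeta R_{p_{l_0}}, \]
which has length $2d$, value $0$, and is closed under complex conjugation (since each $R_p$ is and $\overline{\zeta R_p} = \bar\zeta R_p$). I would then decompose its multiset into $d$ conjugate pairs $\{e^{2\pi ik_j/N}, e^{-2\pi ik_j/N}\}$, producing the tuple $(k_1(\zeta),\ldots,k_d(\zeta))$ with $\mu=0$.

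The only subtlety is parity: decomposing into conjugate pairs demands that $\pm 1$ each occur with even multiplicity in the multiset. A short computation shows both multiplicities have parity equal to $\sum_l b_l$, which is automatically even for odd $N$ and has the parity of $b_1$ for even $N$. I would therefore choose the decomposition so that $b_1$ is even: take $b_1=d$ when $d$ is even, or $b_2=2,\ b_1=d-p_2$ when $d$ is odd and $p_2\leq d$. The one remaining case---$d$ odd with $p_2>d$---is precisely when Proposition \ref{zero_eigenvalue_critierion}(iv) applies, so the hypothesis $0\in\sigma(T^d_N)$ forces $4\mid N$; in that case I would switch to
\[ M'(\zeta) := (d-3)R_2 + iR_2 + \zeta R_2 + \bar\zeta R_2,\qquad \zeta\in U_N, \]
which again has length $2d$ and decomposes into $d$ conjugate pairs for generic $\zeta$, using that $d-3$ is even and $\{i,-i\}$ is itself a conjugate pair.

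Finally, two parameters $\zeta,\zeta'$ produce the same multiset only if they lie in a common conjugate-closed union of at most two cosets of $\langle\zeta_{p_{l_0}}\rangle$ in $U_N$ (respectively $\zeta'\in\{\pm\zeta,\pm\bar\zeta\}$ in the modified case), so each equivalence class has at most $2p_{l_0}\leq 2d$ elements (resp.\ $4$). Distinct multisets yield distinct tuples, hence $m_{T^d_N}(0)\geq N/(2d)-O_d(1)\geq c_d N$ for $c_d$ slightly less than $1/(2d)$. The main obstacle is the parity issue in the exceptional regime $d$ odd with $p_2>d$: no decomposition makes $b_1$ even, and the $4\mid N$ hypothesis from Proposition \ref{zero_eigenvalue_critierion}(iv) is crucial for the $iR_2$ fallback to work.
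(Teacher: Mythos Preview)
Your proof is correct and follows essentially the same approach as the paper: both vary a single rotation parameter (your $\zeta$, the paper's $\delta$ in $2C_{p_{l_0}}(\delta)$) against a fixed background decomposition to produce $\Omega(N)$ distinct representations of $0$, with the same case split for even $N$ matching Proposition~\ref{zero_eigenvalue_critierion}(ii)--(iv). The only real difference is formalism: the paper works directly with the cosine sums $C_p(\delta)$, which are automatically built from conjugate pairs, so it never needs your parity bookkeeping on the multiplicities of $\pm 1$.
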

	
	\begin{proof} We divide it into the cases that are mentioned in Proposition \ref{zero_eigenvalue_critierion}.
		
		Case (i): $2\nmid N$.  If there exists $1\leq l_0\leq s$ such that $b_{l_0}\geq 2$, then we have many different ways to represent $0$ as an eigenvalues. We write $b_{l_0}=2+b'$.  For any $0\leq \delta \leq \frac{\pi}{p_{l_0}}$,  we also have 
		\begin{align}
			0=(\frac{1}{2}(b'+\sum\limits_{l=1,l\neq l_0}^{s}b_l ))&(2\cos0)+\sum_{l=1,l\neq l_0}^{s}b_l(2\cos\frac{2\pi}{p_{l}}+2\cos\frac{4\pi}{p_{l}}+\cdots+ 2\cos\frac{(p_l-1)\pi}{p_{l}}) \notag\\
			&+b'(2\cos\frac{2\pi}{p_{l_0}}+2\cos\frac{4\pi}{p_{l_0}}+\cdots+ 2\cos\frac{(p_{l_0}-1)\pi}{p_{l_0}})  +2C_{p_{l_0}}(\delta).
		\end{align}
		There are $\lfloor \frac{N}{2p_{l_0}}\rfloor+1$ different choices of such $\delta$ such that the above equality is a sum of $d$ eigenvalues of $\Z/N\Z$. Note that $p_{l_0}\leq d$.

		Case (ii): $2\mid N$ and $2\mid d$. We write $d=2b$. For any $0\leq \delta_1,\cdots,\delta_b \leq \frac{\pi}{2}$, we also have 
		\begin{align}
			0=\sum_{k=1}^{b}(2\cos\delta_k+2\cos(\pi-\delta_k)).
		\end{align}
		In fact, for each $k$, there are $\lfloor \frac{N}{4}\rfloor+1$ different choices of such $\delta_k$ such that the above equality is a sum of  $d$ eigenvalues of $\Z/N\Z$.

		Case (iii): $2\mid N, 2\nmid d$ and $p_2\leq d$. We have $d-p_2=2c$ for some $c\in \mathbb{N}$.  Therefore $2d=(2c)\times 2+ 2p_2$. For any $0\leq \delta_1,\cdots,\delta_c \leq \frac{\pi}{2}$ and $0\leq \delta\leq \frac{\pi}{p_2}$,  we have 
		\begin{align}
			0=\sum_{k=1}^{c}(2\cos\delta_k+2\cos(\pi-\delta_k))+2\cos\delta+\sum_{j=1}^{\frac{p_{2}-1}{2}}\big(2\cos(\frac{2j\pi}{p_{2}}+\delta)+2\cos(\frac{2j\pi}{p_2}-\delta)  \big ). 
		\end{align}
		
		Case (iv): $4\mid N, 2\nmid d$ and $p_2>d$. We may also write 
		\[
		0=2\cos\frac{\pi}{2}+(\text{a vanishing sum of $d-1$ terms of eigenvalues of } \Z/N\Z).
		\]
		Note that $d-1$ is even, therefore, we have lots of representatives of $0$ in terms of $d$ eigenvalues as given in the case (ii).
		
	\end{proof}
	
	We will see in Proposition \ref{zero_infinity_iff} that the condition $d\in I(0;N)$ is also a necessary condition for higher multiplicity of the zero eigenvalue.

	\subsection{Multiplicity of the eigenvalue zero}
	
	We first recall an elementary result in number theory.
	
	\begin{lemma}\label{lowerbound_pq}
		Let $p<q$ be two distinct odd primes and $d$  a positive integer. If $2d\geq \text{max}\{(p-1)(q-2),p+q+1\}$, then there exist $k_1,k_2\in \mathbb{N}$ such that $2d=k_1p+k_2q$ with at least one of $k_i\geq 2$.
	\end{lemma}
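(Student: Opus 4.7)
The plan is to split the task into two steps: first show that $2d$ admits some representation $2d=k_1p+k_2q$ with $k_1,k_2\in\mathbb{N}$ using the hypothesis $2d\geq(p-1)(q-2)$, and then exploit the hypothesis $2d\geq p+q+1$ to force at least one of $k_1,k_2$ to be $\geq 2$.

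For the first step I would invoke the classical Sylvester--Frobenius theory for two coprime integers. Since $\gcd(p,q)=1$, the Frobenius number of $\{p,q\}$ is $pq-p-q$, and Sylvester's duality says that for every integer $n$ with $0\leq n\leq pq-p-q$, exactly one of $n$ and $pq-p-q-n$ is representable as a non-negative integer combination of $p$ and $q$. If $2d\geq(p-1)(q-1)=pq-p-q+1$, representability of $2d$ is immediate. Otherwise $2d\leq pq-p-q$, and the complementary integer $m:=pq-p-q-2d$ satisfies $0\leq m\leq p-2$, using $2d\geq(p-1)(q-2)=pq-2p-q+2$. Here the key parity observation enters: $pq$ is odd (as $p,q$ are odd primes) while $p+q$ is even, so $pq-p-q$ is odd; since $2d$ is even, $m$ is odd, hence $1\leq m\leq p-2$. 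Any non-negative representation $m=ap+bq$ would force either $b\geq 1$, giving $m\geq q>p-2$, or $b=0$ and $a\geq 1$, giving $m\geq p>p-2$; both are impossible. Thus $m$ is non-representable, and by Sylvester's duality $2d$ is.

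For the second step, take any representation $2d=k_1p+k_2q$ with $k_1,k_2\in\mathbb{N}$ produced above. If both $k_1,k_2\leq 1$, then $2d=k_1p+k_2q\leq p+q$, contradicting $2d\geq p+q+1$. Hence at least one of $k_1,k_2$ is $\geq 2$, as required.

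The main obstacle to anticipate is the parity subtlety in the first step: without the observation that $pq-p-q$ is odd while $2d$ is even, Sylvester's duality alone would only yield representability for $2d\geq(p-1)(q-1)$, which is strictly stronger than the hypothesis $2d\geq(p-1)(q-2)$. This parity argument is precisely what sharpens the threshold from $(p-1)(q-1)$ to $(p-1)(q-2)$ and is the reason why the lemma, although purely numerical, needs $p,q$ to be odd.
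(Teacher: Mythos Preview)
Your proof is correct, and it takes a genuinely different route from the paper's. The paper argues directly from B\'ezout: it writes $2d=Ap+Bq$ with $A,B\in\Z$, reduces $B$ modulo $p$ to obtain $k_2\in\{0,\dots,p-1\}$ and $k_1=(2d-k_2q)/p$, and then shows $k_1\geq 0$ by a parity case split (using that $k_1+k_2$ is even since $p,q$ are odd). You instead invoke Sylvester--Frobenius duality: either $2d>pq-p-q$ and representability is immediate, or the complement $m=pq-p-q-2d$ lands in $[1,p-2]$ (the lower bound forced by the parity observation that $pq-p-q$ is odd), hence $m$ is trivially non-representable, so by duality $2d$ is. Both arguments hinge on the same parity fact---that $p,q$ odd sharpens the Frobenius threshold from $(p-1)(q-1)$ to $(p-1)(q-2)$---but yours packages it more conceptually via a known symmetry, while the paper's is fully self-contained and constructive (it actually produces $k_1,k_2$). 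The second step, ruling out $k_1,k_2\in\{0,1\}$ via $2d\geq p+q+1$, is identical in both.
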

	\begin{proof}
		First we show that if $2d\geq (p-1)(q-2)$, then $2d\in \mathbb{N}p+\mathbb{N}q$. In fact, there exist $A,B\in \mathbb{Z}$ such that 
		\[ Ap+Bq=2d\]
		by the assumption that $p\neq q$. Let $s\in \Z$ such that $0\leq B-sp< p$. We take $k_1=A+sq$ and $k_2=B-sp$. It suffices to show that $$k_1=\frac{2d-k_2q}{p}\geq 0.$$ Since $2d=k_1p+k_2q$, we have $2\mid k_1+k_2$.  This claim follows from the following calculations:
		
		Case 1: if $k_1$ is even, so is $k_2$. Therefore 
		\[ k_1=\frac{2d-k_2q}{p}\geq \frac{(p-1)(q-2)-(p-1)q}{p}=\frac{2-2p}{p}>-2.\]
		
		Case 2: if $k_1$ is odd, then $0\leq k_2\leq p-2$. Hence 
		\[ k_1=\frac{2d-k_2q}{p}\geq \frac{(p-1)(q-2)-(p-2)q}{p}=\frac{q+2-2p}{p}>-1.\]
		
		Now $2d\geq \text{max}\{(p-1)(q-2),p+q+1\}$, it means that we have taken the terms $p, q, p+q$ out of the set $\mathbb{N}p+\mathbb{N}q$; hence the claim of this lemma follows.

	\end{proof}
	
	\begin{remark}
		Note that the bound $(p-1)(q-2)$ is already optimal, since $(p-1)(q-2)-2=-2p+(p-1)q\notin \mathbb{N}p+\mathbb{N}q$.  However, there exist certain terms in $\mathbb{N}p+\mathbb{N}q$ which are less than $(p-1)(q-2)$. For example, $p=7, q=11$, we have $2\cdot 7+2\cdot 11=36<54=(7-1)(11-2)=3\cdot 7+3\cdot 11$ and $4\cdot 7=28<54$.
	\end{remark}

	\begin{proposition}\label{zero_infinity_iff}
		Let $ N=p_1^{a_1}p_2^{a_2}\cdots p^{a_s}_s$ with $p_1< p_2<\dots<p_s$ are distinct primes. Assume that  $0$ is an eigenvalue of $T^d_N$. Then the  multiplicity	\[ m_{T^d_N}(0)\geq c_dN \]
		for some constant $c_d>0$ which depends only on $d$,  if and only if $d\in I(0;N)$.
	\end{proposition}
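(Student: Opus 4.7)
Plan: The sufficient direction $d \in I(0;N) \Rightarrow m_{T^d_N}(0) \geq c_d N$ is exactly Corollary \ref{great_2_implies_N}. For the necessary direction I would prove the contrapositive: if $d \notin I(0;N)$, then $m_{T^d_N}(0)$ is bounded by a constant depending only on $d$, which in particular precludes any linear-in-$N$ lower bound. First dispose of the even case: if $2 \mid N$ and $d \geq 2$, then the decomposition $2d = d \cdot 2$ has $b_1 = d \geq 2$, so $d \in I(0;N)$; the residual case $d=1$ is immediate since $m_{C_N}(0) \leq 2$. From now on assume $2 \nmid N$ and $0 \in \sigma(T^d_N)$.

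Any representation $0 = 2\sum_{j=1}^d \cos(2\pi k_j/N)$ corresponds to a conjugation-invariant vanishing sum $M = \sum_{j=1}^d (\zeta^{k_j} + \zeta^{-k_j}) \in \mathbb{Z}_{\geq 0}[U_N]$ of length $2d$, where $\zeta = e^{2\pi i/N}$. I would decompose $M$ into minimal vanishing sums via Theorem \ref{Lam-Leungthm}(i): each piece is either \emph{symmetric}, of the form $\eta R_q$ with $q \mid N$ prime, or \emph{asymmetric} of length at least $(p_1-1)(p_2-1) + p_3 - 1$.

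The key structural claim is that under $d \notin I(0;N)$, no asymmetric piece can occur, and the symmetric pieces all carry distinct primes. For the first: if an asymmetric piece occurred, then $2d \geq (p_1-1)(p_2-1) + p_3 - 1$; an elementary computation using $p_1 \geq 3,\, p_2 \geq 5,\, p_3 \geq 7$ shows $(p_1-1)(p_2-1) + p_3 - 1 \geq \max\{(p_1-1)(p_2-2),\, p_1+p_2+1\}$, so Lemma \ref{lowerbound_pq} applied to $(p_1,p_2)$ yields $2d = k_1 p_1 + k_2 p_2$ with some $k_i \geq 2$, contradicting $d \notin I(0;N)$. For the second: grouping symmetric pieces by prime gives $M = \sum_l b_l\,\eta_l R_{p_l}$ with $\sum_l b_l p_l = 2d$, forcing $b_l \leq 1$ by hypothesis. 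Hence $M = \sum_{l \in S} \eta_l R_{p_l}$ for some $S \subseteq \{1,\ldots,s\}$ with $\sum_{l \in S} p_l = 2d$, and Lemma \ref{conj_formal_sum} (invoked via the conjugation invariance of $M$) forces each $\eta_l$ to be a $p_l$-th root of unity; in particular $\eta_l R_{p_l} = R_{p_l}$ as a multiset of $N$-th roots of unity, so $M = \sum_{l \in S} R_{p_l}$ is determined entirely by the subset $S$.

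The conclusion is then clean: the number of subsets $S \subseteq \{p_1,\ldots,p_s\}$ with $\sum_{l \in S} p_l = 2d$ is at most $2^{\pi(2d)}$, a constant depending only on $d$. Each such $S$ determines the multiset $\{k_j\}_{j=1}^d$ uniquely (namely $|S|/2$ zeros together with the half-orbits $aN/p_l$ for $1 \leq a \leq (p_l-1)/2,\ l \in S$; note $|S|$ is even as $\sum p_l = 2d$ with every $p_l$ odd), yielding at most $d!$ ordered tuples, whence $m_{T^d_N}(0) \leq d! \cdot 2^{\pi(2d)}$. The main obstacle is the structural step ruling out asymmetric components: it rests on the numerical interplay between Lam-Leung's asymmetric length threshold and the Frobenius-type Lemma \ref{lowerbound_pq}, and the edge cases $s \leq 2$ (where Lam-Leung's three-prime dichotomy does not apply verbatim) must be handled separately, most easily by observing that $s = 1$ admits no eigenvalue $0$ under $d \notin I(0;N)$, while $s = 2$ forces $2d = p_1 + p_2$ with the unique vanishing sum $R_{p_1} + R_{p_2}$.
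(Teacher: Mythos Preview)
Your proposal is correct and follows essentially the same route as the paper's proof: the sufficient direction via Corollary~\ref{great_2_implies_N}, reduction to odd $N$, decomposition of the conjugation-invariant vanishing sum of length $2d$ into minimal pieces, elimination of asymmetric pieces through the Lam--Leung length bound combined with Lemma~\ref{lowerbound_pq}, the observation that $d\notin I(0;N)$ forces each prime to appear at most once among the symmetric pieces, and the rigidity of the rotations via Lemma~\ref{conj_formal_sum}. The only cosmetic differences are that you phrase the argument as a contrapositive with an explicit bound $d!\cdot 2^{\pi(2d)}$ rather than a proof by contradiction, and you dispose of the cases $s\le 2$ in a single sentence where the paper treats $s=2$ in a separate paragraph (citing \cite[Corollary~3.4]{LL} for the absence of asymmetric minimal sums in the two-prime case).
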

	\begin{proof}
		Corollary \ref{great_2_implies_N} gives the sufficient part. Now we show the necessary part.  Note that the claim automatically holds if $2\mid N$. Hence in the following we assume that $p_1\geq 3$. We prove this  by contradiction. 	Note that the multiplicity of zero being greater than $c_dN$ implies that,  the number of ways to represent $0$ as a sum of eigenvalues of $\Z/N\Z$ grows as fast as or faster than $N$.  We write
		\begin{equation}\label{eq:0asRU}
			0=\sum_{j=1}^{d}2\cos\alpha_j=\sum_{j=1}^{d}(e^{i\alpha_j}+e^{-i\alpha_j}),
		\end{equation}
		which is a vanishing sum of roots of unity of length $2d$.
		
		If $s=2$, by \cite[Corollary 3.4]{LL},  up to rotations, the only minimal vanishing sums of roots of unity are given by 
		\[ 1+\zeta_{p_1}+\cdots+\zeta_{p_1}^{p_1-1}=0  \mathrm{\;\;and\;\;} 1+\zeta_{p_2}+\cdots+\zeta_{p_2}^{p_2-1}=0 . \]
		Since by the contrary of the condition, the only way to write $2d\in \mathbb{N}p+\mathbb{N}q$ is  $2d=p_1+p_2$. Therefore,
		\begin{equation}
			\sum_{j=1}^{d}(e^{i\alpha_j}+e^{-i\alpha_j})=\eta_1(1+\zeta_{p_1}+\cdots+\zeta_{p_1}^{p_1-1})+\eta_2(1+\zeta_{p_2}+\cdots+\zeta_{p_2}^{p_2 -1}),
		\end{equation}
		for some $N$-th roots of unity $\eta_i$ and we may assume that each $\eta_i$ is not a $p_i$-th root of unity  if $\eta_{i}\neq 1$, for $i=1,2$. Now  applying Lemma \ref{conj_formal_sum}, we obtain that $\eta_1=\eta_2=1$.

		It means that there is only one way to write $0$ as a sum of eigenvalues of $\Z/N\Z$. We get a contradiction in this case. 
		
		For $s\geq 3$, again we  may decompose \eqref{eq:0asRU} into  a sum of minimal vanishing sums of roots of unity:
		\begin{equation}\label{eq:0-sum of minimal}
			\sum_{j=1}^{d}(e^{i\alpha_j}+e^{-i\alpha_j})=M_1+\cdots+M_k .
		\end{equation}
		By \cite[Theorem 4.8]{LL}, up to rotations, minimal sums are either symmetric or asymmetric of length no less than $(p_1-1)(p_2-1)+(p_3-1)$.  If there exists an asymmetric minimal sum in the decomposition \eqref{eq:0-sum of minimal}, then 
		\[ 2d\geq  (p_1-1)(p_2-1)+(p_3-1) \geq \mathrm{max}\{(p_1-1)(p_2-2),p_1+p_2+1\}.\] 
		Then by Lemma \ref{lowerbound_pq}, $2d=b_1p_1+b_2p_2$ and at least one of $b_i\geq 2$. Hence we may assume that each $M_r$ is symmetric. By the contrary of the condition that at least one of $b_l\geq 2$, we write $2d=p_{j_1}+\cdots+p_{j_k}$ and each $M_r$ is symmetric of length $p_{j_r}$. We have an equality of formal sums
		\begin{equation}
			\sum_{j=1}^{d}(e^{i\alpha_j}+e^{-i\alpha_j})=\sum_{r=1}^{k}\eta_r(1+\zeta_{p_{j_r}}+\cdots+\zeta_{p_{j_r}}^{p_{j_r}-1}),
		\end{equation}
		for some  $N$-th roots of unity $\eta_r$ and we assume that each $\eta_r$ is not a $p_{j_r}$-th root of unity if $\eta_{r}\neq 1$,  for $r=1,\cdots, k$. Now applying Lemma \ref{conj_formal_sum} again, we have  $\eta_1=\cdots=\eta_k=1$.
		
		In summary, there are only finitely many ways to write $0$ as a sum of eigenvalues of $\Z/N\Z$, and this contradicts to the assumption again. Hence the claim follows.
		
	\end{proof}

	\subsection{The set $I(0;N)$ for odd $N$}\label{oddNset}
	In this subsection, we give some sufficient conditions to ensure large multiplicity of the zero eigenvalue, which are much easier to check.

	\begin{corollary}\label{odd_infinity_suff}
		Assume that $2\nmid N=p_1^{a_1}p_2^{a_2}\cdots p^{a_s}_s$ with $p_1< p_2<\dots<p_s$ are distinct primes. Assume that $0$ is an eigenvalue of $T^d_N$.  By the proof of Lemma \ref{lowerbound_pq}, we write $(p_1-1)(p_2-2)=k_1p_1+k_2p_2$ for some $k_i\in \mathbb{N}$.   If $d$ satisfies one of the following conditions:
		\begin{itemize}
			\item   $2d\geq \text{max}\{(p_1-1)(p_2-2),p_1+p_2+1\}$;
			\item  $2d=k_ip_i$ for some $1 \leq i\leq s$ and $k_i\in \mathbb{N}$;
			\item $2d=k'_1p_1+k'_2p_2$ with $1\leq k'_i$ and $(k'_1,k'_2)\not=(1,1)$.
			
		\end{itemize}

		then 
		the  multiplicity	\[ m_{T^d_N}(0)\geq c_dN \]
		for some constant $c_d>0$ which depends only on $d$. 
	\end{corollary}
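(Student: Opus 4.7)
My plan is to reduce each of the three bulleted hypotheses to the membership $d \in I(0;N)$, at which point Proposition \ref{zero_infinity_iff} immediately yields the desired lower bound $m_{T^d_N}(0) \geq c_d N$. Since $I(0;N)$ is defined by the existence of a representation $2d = \sum_{l=1}^{s} b_l p_l$ with $b_l \in \mathbb{N}$ and at least one $b_l \geq 2$, the task reduces to verifying that each condition produces such a representation.

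For the first bullet, the conclusion is essentially Lemma \ref{lowerbound_pq} applied to the pair $(p_1,p_2)$: the hypothesis $2d \geq \max\{(p_1-1)(p_2-2),\, p_1+p_2+1\}$ is exactly what is needed to guarantee nonnegative integers $k_1,k_2$ with $2d = k_1 p_1 + k_2 p_2$ and at least one $k_i \geq 2$. Setting $b_1 = k_1$, $b_2 = k_2$, and $b_l = 0$ for $l \geq 3$ places $d$ in $I(0;N)$.

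For the second bullet, suppose $2d = k_i p_i$. Since $2 \nmid N$ forces every $p_i$ to be odd while $2d$ is even, the coefficient $k_i$ must itself be even; combined with $d \geq 1$ (forced by the assumption that $0 \in \sigma(T^d_N)$, which rules out the empty sum case) this gives $k_i \geq 2$. So taking $b_i = k_i$ and all other $b_l = 0$ shows $d \in I(0;N)$. For the third bullet, the equality $2d = k'_1 p_1 + k'_2 p_2$ together with the parity argument above (both $p_1,p_2$ odd, $2d$ even) forces $k'_1 + k'_2$ to be even; hence $k'_1,k'_2$ have the same parity. In the even-even case both are at least $2$ automatically, while in the odd-odd case the exclusion $(k'_1,k'_2) \neq (1,1)$ forces at least one of them to be $\geq 3$. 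In either case the required $b_l \geq 2$ is achieved.

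I do not anticipate a serious obstacle here — the corollary is essentially an unpacking of Proposition \ref{zero_infinity_iff} through concrete arithmetic criteria, and the only small subtlety is the parity bookkeeping in bullets (2) and (3) which ensures that membership in $\mathbb{N}p_1 + \cdots + \mathbb{N}p_s$ automatically upgrades to membership in $I(0;N)$ under the stated hypotheses.
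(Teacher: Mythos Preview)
Your proposal is correct and follows essentially the same approach as the paper: the paper's proof is a one-line citation of Lemma \ref{lowerbound_pq}, Proposition \ref{zero_infinity_iff}, and the proof of Proposition \ref{zero_eigenvalue_critierion}(i), and your argument simply unpacks how each bulleted hypothesis yields $d \in I(0;N)$ so that Proposition \ref{zero_infinity_iff} applies. Your parity bookkeeping for the second and third bullets is exactly the detail the paper leaves implicit.
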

	
	\begin{proof}
		This follows from Lemma \ref{lowerbound_pq},  Proposition \ref{zero_infinity_iff} and the proof of Proposition \ref{zero_eigenvalue_critierion}(i).
	\end{proof}

	\begin{remark}
		In fact, if $s=2$, then the three conditions in Corollary \ref{odd_infinity_suff} together give the set $I(0,N)$. But, for example $s=3$ and $p_1=3, p_2=17, p_3=19$. We see that $(p_1-1)(p_2-2)=30 > 28=3p_1+p_3$. 
	\end{remark}
	
	\begin{remark}
		Assume that $N$ is odd and $15\mid N$. Since $8=3+5$,  $0$ is an eigenvalue of $T^4_N$ with a finite multiplicity. However,  for any eigenvalue $2\cos\alpha$ of $\Z/N\Z$ and $0\leq \delta < \frac{\pi}{3}$,
		\begin{equation*}
			2\cos\alpha=2\cos\alpha+2\cos\delta+2\cos(\frac{2\pi}{3}+\delta)+2\cos(\frac{2\pi}{3}-\delta)
		\end{equation*}
		is an eigenvalue of $T^4_N$ with multiplicity at least $\lfloor \frac{N}{6} \rfloor$.
	\end{remark}

	\section{Multiplicity of a general eigenvalue}
	
	In this section, we show our main result Theorem \ref{all_infinity_iff} and give an upper bound of the multiplicities of all eigenvalues.

	Any eigenvalue $\mu$ of  $T^d_N$ can be written as  $2\sum\limits_{j=1}^{d}\cos\frac{2k_j\pi}{N}$ for some $0\leq k_j \leq \lfloor \frac{N}{2}\rfloor$. For any permutation $\sigma\in S_d$ the symmetry group of $d$ elements, it is clear that  $\mu=2\sum\limits_{j=1}^{d}\cos\frac{2k_{\sigma(j)}\pi}{N}$. %If 
	
	\begin{lemma}\label{multi_inequality}
		For an eigenvalue $\mu=2\sum\limits_{j=1}^{d}\cos\frac{2k_j\pi}{N}$ of  $T^d_N$ and any $\sigma\in S_d$,  then we have
		\begin{equation*}
			m_{T^d_N}(\mu) \geq m_{T^{d_1}_N}(2\sum_{j=1}^{d_1}\cos\frac{2k_{\sigma(j)}\pi}{N})\cdot m_{T^{d_2}_N}(2\sum_{j=d_1+1}^{d_1+d_2}\cos\frac{2k_{\sigma(j)}\pi}{N}),
		\end{equation*}
		for any integers   $d_1,d_2 \geq 1$ satisfying $d_1+d_2=d$. 
		
		In particular, if $\mu$ is also an eigenvalue of $T^{d-r}_N$ for some positive integer $r$, then there exists another family $\{k_j'\}_{j=1}^{d-r}$ such that $\mu=2\sum\limits_{j=1}^{d-r}\cos\frac{2k_{j}'\pi}{N}$, and it follows that 
		\begin{equation*}
			m_{T^d_N}(\mu) \geq  m_{T^{d-r}_N}(2\sum_{j=1}^{d-r}\cos\frac{2k_{j}'\pi}{N})\cdot m_{T^{r}_N}(0).
		\end{equation*}
		
	\end{lemma}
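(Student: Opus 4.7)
The plan is to reduce the inequality to a clean combinatorial counting argument by first recording that the multiplicity $m_{T^d_N}(\nu)$ of any eigenvalue $\nu$ of the Cayley graph $T^d_N$ equals the number of index tuples $(\ell_1,\ldots,\ell_d)\in(\Z/N\Z)^d$ satisfying $\nu=2\sum_{j=1}^d\cos\frac{2\ell_j\pi}{N}$. This identification is the standard spectral description of the Cayley graph of the abelian group $(\Z/N\Z)^d$ with the symmetric generating set: each character contributes a one-dimensional eigenspace, and distinct characters sharing the same $\nu$-value produce linearly independent eigenvectors spanning the $\nu$-eigenspace.

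With this in hand, I would fix the permutation $\sigma\in S_d$ and set $\mu_1:=2\sum_{j=1}^{d_1}\cos\frac{2k_{\sigma(j)}\pi}{N}$ and $\mu_2:=2\sum_{j=d_1+1}^{d_1+d_2}\cos\frac{2k_{\sigma(j)}\pi}{N}$, so that $\mu=\mu_1+\mu_2$ and each $\mu_i$ is witnessed as a bona fide eigenvalue of $T^{d_i}_N$ by the restriction of the tuple $(k_{\sigma(j)})_j$ to the appropriate index range.

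The key step is then to exhibit the concatenation map that sends a pair consisting of an index tuple $(\ell_1,\ldots,\ell_{d_1})$ representing $\mu_1$ in $T^{d_1}_N$ together with an index tuple $(m_1,\ldots,m_{d_2})$ representing $\mu_2$ in $T^{d_2}_N$ to the concatenated tuple $(\ell_1,\ldots,\ell_{d_1},m_1,\ldots,m_{d_2})\in(\Z/N\Z)^d$. This map lands in the set of index tuples representing $\mu$ in $T^d_N$ because the two cosine sums add up to $\mu_1+\mu_2=\mu$, and it is injective since splitting the output at position $d_1$ recovers the input pair uniquely. Passing to cardinalities yields the main inequality.

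For the ``in particular'' statement I would simply apply the main inequality with $(d_1,d_2)=(d-r,r)$, using the given representation $(k_1',\ldots,k_{d-r}')$ of $\mu$ in $T^{d-r}_N$ on the first factor and any representation of $0$ in $T^r_N$ on the second factor (non-empty by hypothesis that $0\in\sigma(T^r_N)$). I do not foresee any serious obstacle here: the entire argument is a piece of book-keeping once the multiplicity/index-tuple identification has been made explicit.
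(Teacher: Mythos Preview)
Your proposal is correct and follows essentially the same approach as the paper: both arguments observe that concatenating any representation of $\mu_1$ in $T^{d_1}_N$ with any representation of $\mu_2$ in $T^{d_2}_N$ yields a representation of $\mu$ in $T^d_N$, and deduce the multiplicity inequality from this. Your write-up is in fact more explicit than the paper's (you spell out the injectivity of the concatenation map), whereas the paper dispatches the lemma in two sentences.

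One small remark on the ``in particular'' clause: you write that the set of representations of $0$ in $T^r_N$ is non-empty ``by hypothesis that $0\in\sigma(T^r_N)$'', but the lemma statement does not actually assume this, nor does it follow from $\mu\in\sigma(T^d_N)\cap\sigma(T^{d-r}_N)$. This is harmless, since if $0\notin\sigma(T^r_N)$ then $m_{T^r_N}(0)=0$ and the asserted inequality is trivial; but you may want to phrase it that way rather than invoke a nonexistent hypothesis.
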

	\begin{proof}
		The second assertion follows from the first inequality. But for the first claim,  it suffices to show the case that $\sigma=id$. We may also write $\mu_j=2\cos\frac{2k_j\pi}{N}$ for short.  Now $\mu=\sum\limits_{j=1}^{d}\mu_j$,  and for any $1 \leq d_1 \leq d-1$,  any other expressions of  the value of $\sum\limits_{j=1}^{d_1}\mu_j$ or $\sum\limits_{j=d_1}^{d}\mu_j$ will give one  expression of the eigenvalue $\mu$. Therefore, we have the inequality on multiplicities.
	\end{proof}
	
	\begin{definition}
		A sum of $N$-th  roots of unity $\zeta_1+\cdots+\zeta_k$ is called $k$-admissible if $k$ is even and there exist integers $0\leq m_1,\cdots, m_{k/2} \leq \frac{N}{2}$ such that 
		\[ \zeta_1+\cdots+\zeta_k=\sum_{j=1}^{\frac{k}{2}}(e^{\frac{2\pi m_j}{N}i}+e^{-\frac{2\pi m_j}{N}i}).\]
	\end{definition}
	\begin{remark}
		To be $k$-admissible implies that the value of $\zeta_1+\cdots+\zeta_k$ is an eigenvalue of $T^{\frac{k}{2}}_N$, but not the reverse. For example $-1+1+e^{\frac{2\pi}{3}i}+e^{\frac{4\pi}{3}i}$ is not $4$-admissible for $N=6$, but the value $-1=2\cos\pi+2\cos\frac{\pi}{3}$ is an eigenvalue of $T^2_6$. 
	\end{remark}

	\begin{lemma}\label{FMVS_fix_one}
		Given an $N$-th root of unity $\zeta$ and a positive integer $\ell$. The number of choices of minimal vanishing sum of length $\ell$ containing the term $\zeta$ is uniformly bounded by a constant $E_\ell$ which depends only on $\ell$.
	\end{lemma}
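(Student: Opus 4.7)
\medskip

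\noindent\textbf{Proof proposal.} The plan is to reduce the counting problem to a bounded number of applications of Evertse's theorem (Theorem \ref{Evertsethm}).

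First, I would exploit the rotation action of $U_{\infty}$. If $M = a_1\zeta + a_2\eta_2 + \cdots + a_k\eta_k$ is a minimal vanishing sum of length $\ell$ containing $\zeta$ (so $a_1 \geq 1$, and the $\eta_j$ are roots of unity distinct from $\zeta$ and from each other), then $\zeta^{-1}M$ is again a minimal vanishing sum of length $\ell$, but now containing $1$ in place of $\zeta$. This bijection reduces the problem to counting minimal vanishing sums of length $\ell$ of the form
\[
a_1\cdot 1 + a_2\xi_2 + \cdots + a_k\xi_k = 0,
\]
with $a_j \in \mathbb{Z}_{>0}$, $\sum_{j=1}^k a_j = \ell$, and $1,\xi_2,\ldots,\xi_k$ pairwise distinct roots of unity.

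Second, I would stratify by the coefficient vector. Since $\sum a_j = \ell$ and each $a_j \geq 1$, we have $k \leq \ell$, and the total number of admissible tuples $(a_1,\ldots,a_k)$ (i.e.\ compositions of $\ell$) is at most $2^{\ell-1}$. For each fixed such composition, I reinterpret the equation as
\[
\frac{-a_2}{a_1}\xi_2 + \cdots + \frac{-a_k}{a_1}\xi_k = 1,
\]
which has exactly the form to which Theorem \ref{Evertsethm} applies, with nonzero complex coefficients and $k-1$ unknown roots of unity.

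Third, the key compatibility step: I must verify that the minimality of $M$ forces the tuple $(\xi_2,\ldots,\xi_k)$ to be a non-degenerate solution in the sense of Evertse, i.e.\ no proper non-empty sub-sum vanishes. This is direct from the definition of minimal: if some proper non-empty sub-sum $\sum_{j\in J} a_j \xi_j$ (with $\xi_1 = 1$) were zero, then taking $b_j = a_j$ for $j \in J$ and $b_j = 0$ otherwise would exhibit $M$ as a non-trivial sub-vanishing sum strictly between $0$ and $M$, contradicting minimality. Consequently, for each fixed composition, Evertse's bound $k^{3k^2} \leq \ell^{3\ell^2}$ controls the number of admissible $(\xi_2,\ldots,\xi_k)$.

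Summing over the at most $2^{\ell-1}$ compositions yields a uniform bound
\[
E_\ell \leq 2^{\ell-1}\cdot \ell^{3\ell^2},
\]
depending only on $\ell$, as required. I don't anticipate a serious obstacle here: the only subtle point is the implication ``minimal $\Rightarrow$ non-degenerate in the sense of Evertse'', which is purely definitional, and the reduction by rotation is routine. The proof is essentially an application of Evertse's theorem packaged with a finite enumeration of coefficient patterns.
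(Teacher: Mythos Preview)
Your proposal is correct and follows essentially the same route as the paper: rotate by $\zeta^{-1}$, use that minimality forces non-degeneracy, and apply Evertse's theorem. The paper's version is slightly slicker in that it unrolls each term $a_j\xi_j$ into $a_j$ copies of $\xi_j$, yielding a single equation $-1=\zeta_1/\zeta+\cdots+\zeta_{\ell-1}/\zeta$ with all coefficients equal to $1$ and thus a direct bound $E_\ell=\ell^{3\ell^2}$ without your stratification over compositions.
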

	
	\begin{proof}
		The number of such a minimal vanishing sum equals to the number of non-degenerated solutions $(\zeta_1,\dots,\zeta_{\ell-1})$ of 
		\[ -1=\frac{\zeta_1}{\zeta}+\cdots+\frac{\zeta_{\ell-1}}{\zeta}.\]
		By Theorem \ref{Evertsethm} of Evertse, the number of such solutions is bounded by $E_\ell=\ell^{3\ell^2}$.
	\end{proof}

	In the following, we assume $0$ is the only eigenvalue of $T^0_N$ for our convenience.
	\begin{theorem} \label{all_infinity_iff}
		Let  $N=p_1^{a_1}p_2^{a_2}\cdots p^{a_s}_s$ with $p_1< p_2<\dots<p_s$ are distinct primes. Let $d\geq 2$ and $\mu$ be an eigenvalue of $T^d_N$. Then the following are equivalent:
		\begin{itemize}
			\item[(i)] there exists $r\in \mathbb{Z}_{\geq 1}$  such that $\mu$ is an eigenvalue of $T^{d-r}_N$ and $r\in I(0;N)$;
			\item[(ii)]  its multiplicity satisfies
			\begin{equation*}
				m_{T^d_N}(\mu)\geq cN
			\end{equation*}
			for some constant $c>0$ which depends only on $d$. 
		\end{itemize}
	\end{theorem}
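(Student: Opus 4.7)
The plan is to prove both directions of the equivalence, with (i)$\Rightarrow$(ii) following by combining the large-multiplicity lower bounds for the zero eigenvalue from Section~4 with the splitting inequality of Lemma~\ref{multi_inequality}, and (ii)$\Rightarrow$(i) argued via the contrapositive using the structural classification of vanishing sums developed in Section~3.

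For (i)$\Rightarrow$(ii), the first step is to observe that the arithmetic condition $2r=\sum_l b_l p_l$ with some $b_{l_0}\geq 2$ forces $0\in\sigma(T^r_N)$ via Proposition~\ref{zero_eigenvalue_critierion}, so that Corollary~\ref{great_2_implies_N} then gives $m_{T^r_N}(0)\geq c_r N$ for a positive constant $c_r$ depending only on $r$. Splitting $\mu$ as the sum of an expression in $T^{d-r}_N$ and a vanishing sum of length $r$ in $T^r_N$, Lemma~\ref{multi_inequality} yields
\[
m_{T^d_N}(\mu)\;\geq\;m_{T^{d-r}_N}(\mu)\cdot m_{T^r_N}(0)\;\geq\; c_r N,
\]
and choosing $c_d$ to be the minimum of the finitely many relevant constants $c_r$ for $1\leq r\leq d$ concludes the easier direction.

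For (ii)$\Rightarrow$(i), I would argue by contrapositive: assume that no $r\geq 1$ with $\mu\in\sigma(T^{d-r}_N)$ lies in $I(0;N)$, and deduce that $m_{T^d_N}(\mu)$ is bounded above by a constant depending only on $d$. Fix a reference expression $\mu=2\sum_{j=1}^d\cos\alpha^{(0)}_j$. Every other expression $\mu=2\sum_{j=1}^d\cos\alpha_j$ produces, after converting negative signs to rotations by $\pi$ (passing to $U_{2N}$), a non-negative vanishing sum of roots of unity of length at most $4d$. Decompose this vanishing sum into minimal vanishing sums using Theorem~\ref{Lam-Leungthm}: each minimal summand is either a symmetric rotated $R_p$ of prime length $p\mid 2N$ or an asymmetric piece of length at least $(p_1-1)(p_2-1)+p_3-1$. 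The asymmetric case forces, via Lemma~\ref{lowerbound_pq}, a representation $2r'=b_1 p_1+b_2 p_2$ with some $b_l\geq 2$, which translates to $r'\in I(0;N)$ and $\mu\in\sigma(T^{d-r'}_N)$, contradicting the hypothesis. In the all-symmetric case, the hypothesis rules out any prime $p$ being repeated among the minimal summands; Lemma~\ref{conj_formal_sum} then pins down all rotation parameters on the nose, and Lemma~\ref{FMVS_fix_one}, a consequence of Evertse's Theorem~\ref{Evertsethm}, bounds the number of such decompositions by a constant depending only on $d$.

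The main obstacle is the ``extraction'' step in the contrapositive: showing that the rotational or structural freedom visible in the minimal-vanishing-sum decomposition can be genuinely localized inside a single expression of $\mu$ as a conjugate-closed sub-multi-set of size $2r$ summing to zero, whose removal leaves a legitimate cosine expression of $\mu$ of length $d-r$. Carefully tracking which roots of unity contribute to the ``varying'' versus the ``fixed'' expressions, and managing the interaction between conjugate-closedness and the passage between $U_N$ and $U_{2N}$, constitutes the delicate technical core of the argument.
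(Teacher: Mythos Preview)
Your direction (i)$\Rightarrow$(ii) matches the paper's argument exactly.

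For (ii)$\Rightarrow$(i), your route diverges from the paper's, and the ``extraction'' obstacle you flag at the end is a genuine gap, not a mere technicality. Comparing two cosine expressions for $\mu$ yields a vanishing sum of length $4d$ in $U_{2N}$, and neither the symmetric/asymmetric dichotomy nor Lemma~\ref{conj_formal_sum} (which needs odd modulus and pairwise distinct primes) transfers cleanly to that setting; in particular $2\mid 2N$ always. More seriously, even after decomposing this $4d$-sum into minimal pieces, nothing lets you pull a conjugate-closed vanishing sub-sum back \emph{inside a single representation} of $\mu$, which is what $\mu\in\sigma(T^{d-r}_N)$ requires. Your asymmetric-case sentence illustrates the problem: the presence of a long asymmetric piece bounds $4d$ from below, but gives no $r'$ for which $\mu$ actually lies in $\sigma(T^{d-r'}_N)$.

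The paper sidesteps extraction by never comparing two representations. It analyses a \emph{single} $2d$-admissible tuple $(\zeta_1,\dots,\zeta_{2d})\in U_N^{2d}$ with $\mu=\sum\zeta_j$. Some sub-sum $M_0$ of length $k$ equals $\mu$ non-degenerately, and Evertse's Theorem~\ref{Evertsethm} bounds the choices for $M_0$ outright; the remainder $M_1+\cdots+M_\ell$ is a vanishing sum in $U_N$. The key device is the notion of $k$-\emph{admissibility} (conjugate-closedness of a sub-sum). If $M_0$ is $k$-admissible, then $\mu\in\sigma(T^{k/2}_N)$ directly, so the contrapositive hypothesis forces $d-\tfrac{k}{2}\notin I(0;N)$, and Proposition~\ref{zero_infinity_iff} bounds the choices for the remaining vanishing sum. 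If $M_0$ is not admissible, some unpaired root $\zeta$ in $M_0$ has $\bar\zeta$ lying in some $M_j$; Lemma~\ref{FMVS_fix_one} then bounds the choices for that $M_j$, and one replaces $M_0$ by $M_0+M_j$ and iterates. Since the full sum is $2d$-admissible, this ``grow until admissible'' induction terminates in at most $\ell\le 2d$ steps. That inductive pairing trick is precisely the missing idea replacing your extraction step.
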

	\begin{proof}
		Note that (ii) follows from (i) by Lemma \ref{multi_inequality}.	If $\mu=0$, then we just take $r=d$. Then the equivalence is given by Proposition \ref{zero_infinity_iff}.

		In the following, we assume $\mu\neq 0$ and prove that (ii) implies (i) by contradiction.  Let
		\[ \mathrm{SC}(\mu; N,d)=\Big\{(\alpha_1,\cdots,\alpha_d)\in (\mathbb{Q}\pi)^d \Big| \; \parbox{23em}{there exist $m_1,\cdots, m_d$ such that each $\alpha_{j}=\frac{2 m_j\pi}{N}$, and $\mu=2\sum\limits_{j=1}^{d}\cos \alpha_j$}\Big\}.\]
		
		It is enough to show the cardinality of the set
		\[ \mathrm{SC}(\mu; N,d)\setminus \bigcup_{r\in I(0;N)\cap [1,d]}\mathrm{SC}(\mu; N,d-r)\]
		is uniformly bounded by a constant that depends only on $d$ and does not depend on $N$. On the one hand, We have a natural injective map
		\begin{align*}
			\varphi:	\mathrm{SC}(\mu; N,d)  & \to\mathrm{SR}(\mu;N,2d)\\
			(\alpha_1,\cdots,\alpha_d) &\mapsto (e^{i\alpha_{1}}, e^{-i\alpha_{1}},\cdots, e^{i\alpha_{d}}, e^{-i\alpha_{d}}),
		\end{align*}
		
		where  	\[ \mathrm{SR}(\mu; N,2d)=\Big\{(\zeta_1,\cdots,\zeta_{2d})\in (U_N)^{2d} \Big| \; \mu=\sum\limits_{j=1}^{2d}\zeta_j\Big\}.\]
		On the other hand,  we have 
		\[ \mathrm{SR}(\mu; N,2d)=\bigcup_{k=1}^{2d} \mathrm{SR}^{nd}(\mu; N,2d,k) \]
		with \[ \mathrm{SR}^{nd}(\mu; N,2d,k)=\Big\{ (\zeta_1,\cdots,\zeta_{2d})\in \mathrm{SR}(\mu; N,2d) \Big|\;\parbox{14.1em}{there exists $\sigma\in S_{2d}$ such that $\mu=\sum\limits_{j=1}^{k}\zeta_{\sigma(j)}$ is non-degenerated} \Big\}.\]
		
		It suffices to show that the cardinality of the following set
		\[ W:= \mathrm{SR}^{nd}(\mu; N,2d,k) \cap \varphi \Big( \mathrm{SC}(\mu; N,d)\setminus \bigcup_{r\in I(0;N)\cap [1,d]}\mathrm{SC}(\mu; N,d-r)\Big)\]
		is uniformly bounded by a constant $C_d$ that depends only on $d$. 
		
		Let $\underline{\zeta}=(\zeta_1,\cdots,\zeta_{2d})\in W$. By assumption, there exists $\sigma\in S_{2d}$ such that $\mu=\sum\limits_{j=1}^{k}\zeta_{\sigma(j)}$ is non-degenerated. By Theorem \ref{Evertsethm} of Evertse, we know that the choice of $\zeta_{\sigma(1)},\cdots,\zeta_{\sigma(k)}$ is bounded by a constant $A_d$ that depends only on $d$. We take one, and denote it as $M_0$, which is of length $k$. The rest $\zeta_{\sigma(k+1)}+\cdots+\zeta_{\sigma(2d)}$ is a vanishing sum of length $2d-k$. We say it has a type $(d_1,\cdots,d_{\ell})$-decomposition if  there exist minimal vanishing sums $M_1,\cdots, M_{\ell}$ such that $$\zeta_{\sigma(k+1)}+\cdots+\zeta_{\sigma(2d)}=M_1+\cdots+M_{\ell}$$ and the length $\epsilon(M_j)$ is $d_j$ for any $1\leq j\leq \ell$.  Since $2d-k=\sum\limits_{j=1}^{\ell}d_j$, there are only finite many different types of decomposition. Moreover, the number of types of decomposition is bounded by a constant $B_d$ which depends only on $d$. 
		
		We choose one type $(d_1,\cdots,d_{\ell})$-decomposition and bound the choices of $N$-th roots of unity in each minimal vanishing sums of this fixed type in the following.  We write 
		\[\mu=M_0+M_1+\cdots+M_\ell,\]
		where $M_0$ has been chosen as above and the length of each $M_j$ is $d_j$ has been fixed.
		
		Case 1: If $M_0$ is $k$-admissible, then $\mu$ is an eigenvalue of $T^{\frac{k}{2}}_N$ and hence $0$ is an eigenvalue of $T^{d-\frac{k}{2}}_N$. Since $\underline{\zeta}\in W$, we see that $d-\frac{k}{2}\notin I(0; N)$. Therefore, the number of ways to represent $0$ as eigenvalues of $T^{d-\frac{k}{2}}_N$ is bounded by a constant $D_{d-\frac{k}{2}}\leq D_d$ which depends only on $d$. Hence in this case, the possibility of choice of $\underline{\zeta}$ is bounded by $(2d)!A_dB_dD_d$.
		
		Case 2: If $M_0$ is not $k$-admissible and let $\zeta$ be one of the unpaired roots, then $\overline{\zeta}$ must appear in one of $M_1,\cdots,M_\ell$. Without loss of generality, we assume it is in  $M_1$. By Lemma \ref{FMVS_fix_one}, the choice of $M_1$ is bounded by a constant $E_{d_1}\leq E_d$ that depends on $d$. We just take one, and consider $M_0+M_1$.
		
		Now we do induction:  Let $M_0, M_0+M_1,\cdots, M_0+M_1+\cdots+M_i$ be not $k, k+d_1,\cdots, k+d_1+\cdots+d_i$ admissible respectively, with the conjugate of one of unpaired roots in $M_0+\cdots+M_j$ lies in $M_0+\cdots+M_j+M_{j+1}$, for $1\leq j\leq i-1$. If  $M_0+M_1+\cdots+M_i+M_{i+1}$ is $k+d_1+\cdots+d_{i+1}$-admissible, then as in Case 1, the number of choices of $M_{i+2}+\cdots+M_{\ell}$ is bounded by $D_d$. Otherwise, as in Case 2 and Lemma \ref{FMVS_fix_one}, we have $E_{d_{i+1}}\leq E_d$ choices of $M_{i+1}$.

		Since $M_0+M_1+\cdots+M_\ell \in W$ is $2d$-admissible, the above process stops at most after $\ell$ steps. Hence the possibility of choices of $\underline{\zeta}$ in $W$ is bounded by 
		\begin{align*}
			C_d:&=(2d)!A_dB_d(D_d+E_d(D_d+E_d(D_d+\cdots)))\\
			&=(2d)!(D_d+E_dD_d+E_d^2D_d+\cdots+E^{\ell-1}_dD_d+E_d^{\ell}).
		\end{align*}
		
	\end{proof}

	As we see that there are many eigenvalues with large multiplicities in the higher dimensional case, however, we always have a finiteness result for a range of eigenvalues by  Theorem \ref{all_infinity_iff} above:
	\begin{corollary}\label{rang_bound}
		For $N\geq 3$, we list all eigenvalues of $T^d_N$ as $\mu_1\geq \mu_2 \geq \cdots \geq \mu_{N^d}$. Then there exists a constant $c=c_d$, $0< c \leq \frac{1}{2}$, such that if $k\in [1, cN^d) \cup ((1-c)N^d, N^d] $,  $N\geq 3$,  then the multiplicity $m_{T^d_N}(\mu_k)$ is uniformly bounded by a constant $C_d$ which only depends on $d$. 	
	\end{corollary}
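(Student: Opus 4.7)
The plan is to combine the dichotomy of Theorem~\ref{all_infinity_iff} with a direct count of extremal eigenvalues that lie outside the spectral range of any proper sub-torus $T^{d-r}_N$. First, I extract from the proof of Theorem~\ref{all_infinity_iff} the slightly sharper statement that there exists a constant $C_d$ depending only on $d$ such that any $\mu\in\sigma(T^d_N)$ which is \emph{not} an eigenvalue of $T^{d-r}_N$ for any $r\in\mathbb{Z}_{\geq 1}$ satisfies $m_{T^d_N}(\mu)\leq C_d$: under this hypothesis each $\mathrm{SC}(\mu;N,d-r)$ is empty, so the bound on $|\mathrm{SC}(\mu;N,d)\setminus\bigcup_{r}\mathrm{SC}(\mu;N,d-r)|$ established in that proof is in fact a bound on the full multiplicity $m_{T^d_N}(\mu)=|\mathrm{SC}(\mu;N,d)|$. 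Since every eigenvalue of $T^{d-r}_N$ lies in $[-2(d-r),2(d-r)]\subseteq[-2(d-1),2(d-1)]$ for $r\geq 1$, any $\mu\in\sigma(T^d_N)$ with $|\mu|>2(d-1)$ automatically has $m_{T^d_N}(\mu)\leq C_d$.

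It therefore suffices to show that, for some $c_d'>0$ depending only on $d$, at least $c_d'N^d$ tuples $(k_1,\dots,k_d)\in(\Z/N\Z)^d$ satisfy $\mu(k_1,\dots,k_d)>2(d-1)$ and symmetrically at least $c_d'N^d$ satisfy $\mu(k_1,\dots,k_d)<-2(d-1)$. For the upper count, fix $\alpha=\tfrac{1}{2\pi}\arccos(1-\tfrac{1}{2d})>0$ and let $m_0=\lfloor\alpha N\rfloor$; any tuple with each $k_j\in\{0,\dots,m_0\}\cup\{N-m_0,\dots,N-1\}$ satisfies $\cos(2\pi k_j/N)>1-\tfrac{1}{2d}$ and hence $\mu>2d-1>2(d-1)$, which yields $(2m_0+1)^d\geq \alpha^d N^d$ admissible tuples once $N$ is large. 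The lower count is analogous, using tuples with each $k_j$ in a window of radius $\alpha N$ around $\lfloor N/2\rfloor$, since $\cos\theta<-1+\tfrac{1}{2d}$ exactly on the symmetric interval of length $2\arccos(1-\tfrac{1}{2d})=4\pi\alpha$ centered at $\pi$.

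Taking $c=\min(\alpha^d,\tfrac{1}{2})$ and enlarging $C_d$ to absorb the finitely many small $N$ (where the trivial bound $m_{T^d_N}(\mu)\leq N^d$ suffices) completes the proof: in the sorted list $\mu_1\geq\cdots\geq\mu_{N^d}$, the top $cN^d$ entries each exceed $2(d-1)$ and the bottom $cN^d$ entries are each strictly less than $-2(d-1)$, so both ranges have multiplicity bounded by $C_d$ by the first paragraph. The main subtlety is precisely the opening step: Theorem~\ref{all_infinity_iff} is stated as an equivalence involving the order-of-growth threshold $c_dN$, but its proof simultaneously delivers the constant bound $C_d$ in the negative direction, and this constant-versus-linear gap is what turns the elementary counting argument into a uniform conclusion.
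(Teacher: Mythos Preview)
Your proof is correct and follows essentially the same route as the paper: invoke the uniform constant bound hidden in the proof of Theorem~\ref{all_infinity_iff} for eigenvalues that do not descend to a lower-dimensional torus, then count tuples producing extremal eigenvalues via a $\cos$-threshold. The only notable difference is that the paper uses the cutoff $|\mu|>2(d-2)$, exploiting that every $r\in I(0;N)$ satisfies $r\geq 2$, whereas you use the cruder cutoff $|\mu|>2(d-1)$ by forbidding descent for any $r\geq 1$; this costs you a slightly smaller $c_d$ but is immaterial since optimality is not claimed.
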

	
	\begin{proof}
		Let $\mu$ be an eigenvalue of $T^d_N$. We first claim that if $2(d-2)<\mu \leq 2d $ or $-2(d-2)>\mu \geq -2d $, then the  multiplicity $m_{T^d_N}(\mu)$ is uniformly bounded by a constant which only depends on $d$. Otherwise, by Theorem \ref{all_infinity_iff},  there exists $r\geq 2$ such that $\mu$ is an eigenvalue of $T^{d-r}_N$. But the range of $\mu$ is $-2(d-2) \leq -2(d-r)\leq \mu \leq 2(d-r)\leq 2(d-2)$. This contradicts to the assumption that $\mu>2(d-2)$ or $\mu < -2(d-2)$.  Hence the claim follows. 
		
		Let $\alpha = \frac{\cos^{-1}(1-\frac{2}{d})}{2\pi }$. Then for any $t$, $0\leq t < \alpha N$,  we have $\cos \frac{2t \pi}{N} > 1- \frac{2}{d}$.  Hence,  any eigenvalue of the form 
		\[\mu = 2\cos (\frac{2t_1 \pi}{N}) + 2\cos (\frac{2t_2 \pi}{N}) +\cdots +  2\cos (\frac{2t_d \pi}{N}) \]
		with $0\leq t_j <\alpha N$, $1\leq j \leq d$, is bigger than $2(d-2)$.  There are at least $(\alpha N-1)^d$ such eigenvalues of $T^d_N$.  For safety, we take $c_d= \frac{1}{2}\alpha^d$. Then, for any $k \in [1,  c_d N^d)$, the multiplicity $m_{T^d_N}(\mu_k)$ is uniformly bounded by a constant $C_d$ which only depends on $d$. By symmetry, the conclusion also holds for $k \in ((1-c_d) N^d, N^d]$.
	\end{proof}
	
	\begin{remark}
		As we mentioned before in Introduction, the constant $c_d$ here is clearly not optimal. 
	\end{remark}
	Another consequence is the following upper bound for all eigenvalues:

	\begin{corollary}\label{general_bound}
		Let $\mu$ be an eigenvalue of $T^d_N$ and $p_1$ the minimal prime factor of $N$.  Then $m_{T^d_N}(\mu)\leq C_dN^{\frac{d}{p_1}} $, where $C_d$ is a constant, which depends only on $d$.
	\end{corollary}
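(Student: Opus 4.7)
The plan is to establish the bound $m_{T^d_N}(\mu)\leq C_d N^{d/p_1}$ by strong induction on $d$, with $C_d$ a constant depending only on $d$. For the base case $d<p_1$: the smallest element of $I(0;N)$ equals $p_1$ (attained by taking $b_1=2$, so $2r=2p_1$), hence $I(0;N)\cap[1,d]=\emptyset$. Theorem \ref{all_infinity_iff} then forces $m_{T^d_N}(\mu)\leq C_d$ for a constant depending only on $d$, which trivially dominates $N^{d/p_1}$.

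For the inductive step $d\geq p_1$ with $\mu\neq 0$, I follow the core-extraction scheme from the proof of Theorem \ref{all_infinity_iff}. Each expression of $\mu$ maps via $\varphi$ to an admissible $2d$-tuple of $N$-th roots of unity, which contains a non-degenerate sub-tuple of some length $k\geq 1$ summing to $\mu$, whose number is bounded by an Evertse-type constant depending only on $d$ by Theorem \ref{Evertsethm}. Whether the core is admissible (Case 1) or not (Case 2, handled via Lemma \ref{FMVS_fix_one} and conjugate matching), the remainder is a vanishing admissible sum corresponding to an expression of $0$ in $T^r_N$ for some $r<d$. By induction $m_{T^r_N}(0)\leq C_r N^{r/p_1}\leq C_d N^{(d-1)/p_1}<C_d N^{d/p_1}$, and summing over the finitely many choices of core length and combinatorial configuration yields the bound.

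The main case is $\mu=0$. Each expression gives a vanishing admissible $2d$-tuple; decompose it via Theorem \ref{Lam-Leungthm} into minimal vanishing sums and peel off one, say $M_1$, of length $\ell_1\geq p_1$. If $M_1$ is self-conjugate (admissible), then writing $M_1=\eta R_p$ in the symmetric case forces $\eta^2\in U_p$, giving at most $2p\leq 4d$ choices; removing $M_1$ leaves an expression of $0$ in $T^{d-\ell_1/2}_N$, bounded by $C_{d-\ell_1/2} N^{(d-\ell_1/2)/p_1}\leq C_d N^{d/p_1-1/2}$ by induction. If $M_1$ is not self-conjugate, then $M_1\cap\bar M_1=\emptyset$ and the admissible block $M_1\cup\bar M_1$ of length $2\ell_1$ is parameterized by $\eta$ modulo $U_{p_1}$, giving at most $N/p_1$ choices for symmetric $M_1=\eta R_{p_1}$, and $O_d(N)$ choices in general by Lemma \ref{FMVS_fix_one} and Evertse; removing the block leaves an expression of $0$ in $T^{d-\ell_1}_N$, bounded by $C_{d-\ell_1} N^{(d-\ell_1)/p_1}$ by induction. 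The total contribution is $O_d(N^{(d-\ell_1)/p_1+1})$, which for the extremal $\ell_1=p_1$ gives exactly $N^{d/p_1}$, while any longer $\ell_1$ (including all asymmetric minimal vanishing sums, which have length at least $(p_1-1)(p_2-1)+p_3-1$) produces a strictly smaller exponent.

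The main obstacle I anticipate is the bookkeeping in the $\mu=0$ case: verifying that the admissibility (conjugation symmetry) constraint effectively halves the degrees of freedom. Without this constraint, the naive count from decomposing a length-$2d$ vanishing sum into up to $2d/p_1$ minimal sums, each contributing a rotational factor of $\sim N$, would give $N^{2d/p_1}$; the admissibility constraint pairs conjugate minimal sums so that each pair contributes only one factor of $N$, yielding the square-root reduction to $N^{d/p_1}$. Handling asymmetric minimal vanishing sums arising when $N$ has at least three distinct prime factors adds extra cases, but since their lengths strictly exceed $p_1$, their contributions are automatically dominated, and carrying the induction carefully through all decomposition patterns gives the required bound.
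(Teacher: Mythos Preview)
Your plan is substantially more elaborate than what the paper actually does: the paper's entire proof of Corollary~\ref{general_bound} is the single sentence ``the minimal length of a vanishing sum of cosines is $p_1$, hence there are at most $\lfloor d/p_1\rfloor$ in any sum of $d$ cosines.'' In other words, the paper simply observes that any representation $\mu=\sum_{j=1}^d 2\cos\alpha_j$ can contain at most $\lfloor d/p_1\rfloor$ independent vanishing cosine-blocks, each contributing a rotation parameter of size $O(N)$, and declares the bound. Your induction, the core-extraction via Theorem~\ref{all_infinity_iff}, and the self-conjugate/non-self-conjugate dichotomy are all your own additions aimed at making that heuristic rigorous; the underlying idea is the same, but the packaging is entirely different.

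One genuine wrinkle in your $\mu=0$ case deserves attention. When you decompose the admissible $2d$-tuple into minimal vanishing sums $M_1+\cdots+M_\ell$ and then ``peel off $M_1$,'' you assume that either $\bar M_1=M_1$ or else $\bar M_1$ occurs among $M_2,\dots,M_\ell$ and is disjoint from $M_1$. Neither is automatic: decompositions into minimal vanishing sums are not unique and need not be conjugation-invariant, and a non-self-conjugate $M_1$ can certainly contain some conjugate pairs (so $M_1\cap\bar M_1\neq\emptyset$ as multisets). You therefore need to argue that a conjugation-symmetric decomposition can always be chosen (e.g.\ by starting from an arbitrary decomposition, applying complex conjugation to obtain a second one, and refining), or else reorganize the induction so that you peel off an entire admissible sub-block of length $\geq 2p_1$ directly rather than a single minimal $M_1$. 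Once this is straightened out your scheme goes through, and indeed yields a cleaner justification than the paper's one-line sketch. The treatment of asymmetric minimal sums is fine as stated: their length $\geq (p_1-1)(p_2-1)+p_3-1>2p_1$ already makes their contribution subdominant, so a crude $O_d(N)$ count per block suffices.
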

	\begin{proof}
		We have seen that the minimal length of a vanishing sum of  cosines is $p_1$, hence, there are at most $\lfloor  \frac{d}{p_1}\rfloor$ in any sum of term $d$ cosines. 
	\end{proof}
	\begin{remark}\label{optimal_bound_all}
		The exponent $\frac{d}{p_1}$ in Corollary \ref{general_bound} is optimal. For example, if $d=kp_1$, we have \[ 0=\sum_{j=1}^{k}\left( 2\cos\delta_j+\sum_{m=1}^{\frac{p_1-1}{2}}2\huge(\cos(\frac{2m\pi}{p}+\delta_j)+\cos(\frac{2m\pi}{p}-\delta_j)\huge)\right),\]
		for any $0 \leq \delta_j \leq \frac{N}{p_1}$. It implies that the multiplicity of zero in $T_N^{kp_1}$ is at least $(\frac{N}{p_1})^k$.
	\end{remark}

	\section{Eigenvalue multiplicity of abelian Cayley graphs}
	Although in this article we only consider the Cayley graph $(\Z/N\Z)^d$ with the generating set $S= \{\pm e_1,\dots, \pm e_d\}$. The method used here apparently works for arbitrary abelian Cayley graphs. Recall that for any abelian Cayley graph $(G, S)$,  $G$ an abelian group and $S$ a symmetric generating set of $G$, its eigenvalues are 
	given as  
	\[ \mu=\sum_{g \in S} \chi(g),\]
	where $\chi$ is any character of $G$. See Babai \cite{Babai}. Then $\mu $ is a sum of cosines since $S$ is symmetric. To determine the multiplicity of $\mu$, one studies
	vanishing sums of cosines as those in this paper. To characterize eigenvalues with high multiplicities, however, is more subtle. For example,  we have the following new phenomenon. 
	
	\begin{example}
		Let $(\Z/N\Z,S)$ be the Cayley graph associated with the cyclic group $\Z/N\Z$ with generating set $S=\{ \pm 1, \pm (\frac{N}{p}+1), \pm (\frac{2N}{p}+1), \dots, \pm (\frac{(p-1)N}{p}+1)\}$ for a prime $p\mid N$. Then, for any $0\leq k\leq N-1$ and $p\nmid k$, we have eigenvalues
		\[ \mu_k=2\cos(\frac{2k\pi}{N})+2\cos((\frac{N}{p}+1)\frac{2k\pi}{N})+2\cos((\frac{2N}{p}+1)\frac{2k\pi}{N})+\dots + 2\cos((\frac{(p-1)N}{p}+1)\frac{2k\pi}{N})=0.\]
		This means that the zero-eigenvalue has a very high multiplicity.  In particular, if $N$ is even, and we take $p=2$, the generating set $S=\{\pm 1, \pm(\frac{N}{2}+1)\}$ has only four elements. However, for any odd $k$,
		\[ \mu_k=2\cos \frac{2k\pi}{N}+2\cos ((\frac{N}{2}+1)\frac{2k\pi}{N})=0.\]
		Then the multiplicity of zero eigenvalue is at least $\frac{N}{2}$ in this case.
	\end{example}

	The above example shows that large eigenvalue multiplicities can also be caused by relations given by the generating set. Determining the eigenvalue multiplicity of a general abelian Cayley graph with an arbitrary generating set is an interesting problem and we hope to come back to this topic in future.
	
	\vskip 6mm
	\noindent 
	{\bf Acknowledgements.} 
	We would like to thank Prof.\,Wei Zhu for the help of some numerical verifications and Prof.\,Jordan Ellenberg,  Prof.\,Mounir Hajli and Prof.\,Yichao Zhang for helpful discussions; especially, Prof.\,Ze\'ev Rudnick for his many valuable suggestions, which helped to greatly improve the readability of this article.

\end{document}